\newcommand{\N}{\mathbb N}
\newcommand{\R}{\mathbb R}
\newcommand\normsup{\norm_{\infty}}
\newcommand\norm{\parallel \cdot \parallel}
\newcommand\qiC{\mbox{qi}C}
\newcommand\qidual{\mbox{qi}C^*}
\newcommand\cero{0_X}
\begin{document}

\title{On dentability and cones with a large dual\thanks{The author M. A. Melguizo Padial has been supported by project MTM2017-86182-P (AEI/FEDER, UE). The author Fernando Garc\'i{}a-Casta\~no has been partially supported by MINECO and FEDER (MTM2014-54182), by Fundaci\'{o}n S\'{e}neca - Regi\'{o}n de Murcia (19275/PI/14), and by MTM2017-86182-P (AEI/FEDER, UE).}
}


\author{Fernando Garc\'i{}a-Casta\~no   \and  M. A. Melguizo Padial
}


\institute{Fernando Garc\'i{}a-Casta\~no \at
Departamento de Matem\'atica Aplicada. Escuela Polit\'ecnica
Superior. Universidad de Alicante.  03080 San Vicente del Raspeig.
Alicante, Spain\\
              Tel.: +34 965903400 x Ext. 2823 \\
              Orcid: 0000-0002-8352-8235\\
              \email{fernando.gc@ua.es}           
           \and
           M. A. Melguizo Padial \at
             Departamento de Matem\'atica Aplicada. Escuela Polit\'ecnica
Superior. Universidad de Alicante.  03080 San Vicente del Raspeig.
Alicante, Spain
}

\date{Received: date / Accepted: date}

\maketitle

\begin{abstract}
In this paper we provide some equivalences on dentability in normed spaces. Among others we prove: the origin is a denting point of a pointed cone $C$  if and only if it is a point of continuity for such a cone and $\overline{C^*-C^*}=X^*$; $x$ is a denting point of a convex set $A$ if and only if $x$ is a point of continuity and a weakly strongly extreme point of $A$.  We also analize how our results help us to shed some light on several open problems in the literature.

\keywords{Denting point \and point of continuity \and dual cone \and weakly strongly extreme point}
\noindent {\bf Mathematics Subject Classification (2010)} MSC 46B40 $\cdot$ MSC 46A40 $\cdot$ \break MSC 46B20
\end{abstract}
\section{Introduction}
Throughout the paper $X$ will denote a normed space, $\norm$ the norm of $X$, $X^*$ the dual space of $X$, $\cero$ the origin of $X$, and $\R_+$ the set of nonnegative real numbers. We call \emph{weak} the weak topology on $X$ and \emph{weak}$^*$ the weak star topology on its dual $X^*$. A nonempty convex subset $C$ of $X$ is called a \emph{cone} if $\alpha C \subset C$, $\forall \alpha \in \R_+$. Fixed a cone $C$, we define the \emph{dual cone} for $C$ by $C^{*}:=\{f \in X^* \colon f(c)\geq 0\, , \forall c \in C\}$ --which is a weak$^*$-closed subset of $X^*$-- and the \emph{bidual cone} for $C$ by $C^{**}:=\{T \in X^{**}\colon T(f)\geq 0,\, \forall f \in C^*\}$ --which is a weak$^*$-closed subset of $X^{**}$--. A cone $C \subset X$ is said to be \emph{quasi-generating} (resp. \emph{generating}) if $\overline{C-C}=X$ (resp. if $C-C=X$). It is clear that generating cones are quasi-generating, but the converse is not true (for example, a proper dense subspace). 
The notions of generating and quasi-generating cones have been widely studied and appear in many results regarding ordered vector spaces, see for example \cite{ali-tour,Borwein1992}. Fixed a normed space $X$, by q$\mathcal{G}^*$ we will denote the family of cones in $X$ that have a quasi-generating dual cone.  

Let us introduce the notion of denting point. It connects with the Radon--Nikod\'ym property, with renorming theory, and with optimization theory. Before introducing such a notion we need to fix some notation and terminology. A set $H$ is called an \emph{open half space} of $X$ if it is a weakly open set of the form $H=\{x \in X \colon f(x)<\lambda\}$ for some $f \in X^*\setminus\{0_{X^*}\}$ and $\lambda \in \R$. We denote $H$ briefly by $\{f<\lambda\}$. Let $A\subset X$, a \emph{slice} of $A$ is a nonempty intersection of $A$ with an open half space of $X$. We denote by conv($A$) (resp. by $\overline{\mbox{conv}}$($A$)) the \emph{convex hull} of $A$ (resp. the \emph{closed convex hull} of $A$). Besides, we denote by $B_{\epsilon}(x)$ the open ball with centre $x \in X$ and radius $\epsilon>0$ and by $\overline{B_{\epsilon}(x)}$ the corresponding closed ball. In case $x=\cero$ and $\epsilon=1$ the former set is denoted by $B_X$ and the latter by $\overline{B_X}$. Let $A$ be a convex subset of $X$, $x \in A$ is said to be a \emph{denting point} of $A$ if $x \not \in \overline{\mbox{conv}}(A\setminus B_{\epsilon}(x))$, $\forall \epsilon>0$. By the Hahn--Banach theorem, $x$ is a denting point of $A$ if and only if for every $\epsilon>0$ there exists a slice of $A$ containing $x$ with diameter less than $\epsilon$.  Another notion involved in this work is that of point of continuity. Let $A$ be a convex subset of $X$, $x \in A$ is said to be a \emph{point of continuity} for $A$ if the identity map $(A,\mbox{weak})\rightarrow (A,\norm)$ is continuous at $x$.

The notion of point of continuity is, in general, weaker than the notion of denting point. However, it is still an open problem to clarify if both notions coincide for closed cones in normed spaces. Such a problem raised in \cite[Conclusions]{Gong1995}, paper in which these properties were applied in vector optimization theory (more applications of these properties in vector optimization can be found in \cite{Bednarczuk1998,Casini2010b,Petschke1990}). In \cite{GARCIACASTANO20151178,Kountzakis2006} were given some results characterizing the notion of denting point for a cone in a normed space in terms of the notion of point of continuity plus an extra assumption on the cone. Let us note that such extra assumptions seem not to be related to the closedness of the cone. However, in the context of Banach spaces both notions (dentability and point of continuity) become equivalent for closed cones. The former equivalence was shown in  \cite{Daniilidis2000} as a direct consequence of a well known characterization of denting points of a closed convex bounded set in a Banach space, namely \cite[Theorem]{Lin1988}.  In this paper we continue the research we began in \cite{GARCIACASTANO20151178} about the link between dentability and point of continuity in normed spaces (not necessarily complete) and for arbitrary cones (not necessarily closed). In this line of research, we provide now some new results which have a wide range of applicability --as we will see below--, and we solve --among others-- Problem 2.6 we stated in \cite{GARCIACASTANO20151178}.

In the following paragraphs we will present a brief overview of the main results of this paper together with their connections with other results and problems in the literature. Our first contribution is Theorem \ref{Teorema_caracte_denting}, in which we stablish --among others-- the equivalence between dentability and point of continuity for cones in q$\mathcal{G}^*$. Roughly speaking, the former equivalence can be interpreted as meaning that the notions of denting point and point of continuity in a cone coincide when the dual cone is ``large enough''. This result generalizes \cite[Theorem 4]{Kountzakis2006} and, in addition, it provides the following: a negative answer to Gong's question in  \cite[Conclusions]{Gong1995} for cones in q$\mathcal{G}^*$ (see Corollary~\ref{Coro_Gong}); the equivalence between the following results on density of Arrow, Barankin and Blackwell's type, \cite[Theorem~3.2~(a)]{Gong1995} and \cite[Corollary 4.2]{Petschke1990}, for cones in q$\mathcal{G}^*$ (see Corollary~\ref{Coro_density_theorems}); and a positive answer to \cite[Problem 5]{Kountzakis2006} for cones in q$\mathcal{G}^*$ (see Corollary~\ref{Coro_Kountz}).  Luckily, q$\mathcal{G}^*$ is a large class of cones. It contains, for example, the normal cones introduced by Krein in \cite{Krein1940}. Normal cones are useful in the theory of ordered topological vector spaces (see \cite{ali-tour}) and in the study of extremal problems in differential and integral equations (see \cite{Krasnoselskii1964}). 


Returning to \cite{GARCIACASTANO20151178} --paper of which this work is a continuation--, it is worth pointing out that \cite[Proposition 2.5]{GARCIACASTANO20151178} is a cornerstone in the proof of the main theorem in  \cite{GARCIACASTANO20151178}. Such a proposition characterizes the property that the origin is a weakly strongly extreme point of a pointed closed cone. However, it was unknown if the above mentioned characterization remains true if we drop down to the cone the assumption of closedness. Namely, such a question was stated in \cite[Problem~2.6]{GARCIACASTANO20151178}. In this paper we solve the former question in the negative (see Remark~\ref{Remark_Problem_Prop_Vicente_Mejorada}). Let us note that such a solution is a consequence of Theorem~\ref{Tma_Prop_Vicente_para_Cono_mejorada} in this paper, which provides a characterization of the property that the origin is a locally weakly strongly extreme point of a pointed (not necessarily closed) cone.




As has been said before, the equivalence of dentability and point of continuity for closed cones in Banach spaces was provided by Daniilidis who made use of the characterization \cite[Theorem]{Lin1988}. Let us point out that such a characterization states that a point  of a closed, convex, and bounded set in a Banach space is a denting point if and only if it is a point of continuity and an extreme point for such a set. However, the former equivalence does not work for noncomplete normed spaces (see \cite[Example]{Lin1989}) and, to our knowledge, a version for normed spaces  is required. In this paper, by means of Theorem \ref{Teorema_caracte_denting_point}, we provide such a version changing the notion of extreme point by that of weakly strongly extreme point and assuming --only-- the convexity of the involved set. 

The paper is organized as follows. In Section \ref{Sec_Quasi_gen_cones} we state and prove Theorems \ref{Teorema_caracte_denting}, \ref{Tma_Prop_Vicente_para_Cono_mejorada}, and \ref{Teorema_caracte_denting_point} mentioned above. In Section \ref{Secc_Normal_cones} we apply the equivalence between dentability and point of continuity for cones in q$\mathcal{G}^*$ from Theorem \ref{Teorema_caracte_denting} to solve --for cones in q$\mathcal{G}^*$-- some open problems in the literature we mentioned before (Gong's question and so on). Finally, in Section \ref{Secc_Examples} we  again apply the former equivalence from Theorem \ref{Teorema_caracte_denting}, but this time to several noncomplete ordered normed spaces. As a result, we obtain some geometric properties of their corresponding canonical order cones.

%

\section{Results on dentability}\label{Sec_Quasi_gen_cones}
The main objective in this section is to state and prove Theorems \ref{Teorema_caracte_denting}, \ref{Tma_Prop_Vicente_para_Cono_mejorada}, and \ref{Teorema_caracte_denting_point} just commented in the introduction. However, before stating Theorem \ref{Teorema_caracte_denting}, we have to say that it is a complement to Theorem 1.1 in \cite{GARCIACASTANO20151178} in the sense that we provide here more caracterizations to the property of dentability of a cone at the origin. In fact, thanks to the use of some equivalences from \cite[Theorem 1.1]{GARCIACASTANO20151178}, we will be able to provide a short proof of Theorem \ref{Teorema_caracte_denting}. Next, we state such equivalences. 
\begin{theorem}(From \cite[Theorem 1.1]{GARCIACASTANO20151178})\label{Tma_JMAA}
Let $X$ be a normed space and $C \subset X$ a pointed cone. The following are equivalent:
\item[(i)] $0_X$ is a denting point of $C$.
\item[(ii)] $C$ has a bounded slice.
\item[(iii)] The interior of $C^*$ in $X^*$ is not empty.
\end{theorem}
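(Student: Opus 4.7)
The plan is to establish the cyclic chain (i) $\Rightarrow$ (ii) $\Rightarrow$ (iii) $\Rightarrow$ (i), the main substance lying in the middle implication. The first step is essentially definitional: if $0_X$ is a denting point of $C$, then by the slice characterization of denting points (noted in the introduction via Hahn--Banach), for every $\epsilon>0$ there exists a slice of $C$ containing $0_X$ whose diameter is less than $\epsilon$; such a slice is automatically bounded, giving (ii).

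For (ii) $\Rightarrow$ (iii), suppose $S=C\cap\{f<\lambda\}$ is a bounded slice, say $\|c\|\le M$ for every $c\in S$. The plan has two substeps. First I would show $f\in C^*$: if there were $c_0\in C$ with $f(c_0)<0$, then $f(\alpha c_0)=\alpha f(c_0)\to-\infty$ and $\|\alpha c_0\|\to\infty$ as $\alpha\to\infty$, contradicting boundedness of $S$. Second, and this is the delicate part, I would upgrade $f\ge 0$ on $C$ to a linear lower bound $f(c)\ge \eta\|c\|$ for some $\eta>0$. The trick is to use the cone structure: given $c\in C\setminus\{\cero\}$ with $f(c)>0$, scale by $\alpha=\lambda/(2f(c))$ so that $\alpha c\in S$, which yields $\|c\|\le (2M/\lambda)f(c)$; and if $f(c)=0$, then $\alpha c\in S$ for every $\alpha>0$ forces $c=\cero$. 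Consequently $f(c)\ge(\lambda/(2M))\|c\|$ for all $c\in C$. Now any $g\in X^*$ with $\|g-f\|<\lambda/(4M)$ satisfies $g(c)\ge f(c)-\|g-f\|\|c\|\ge (\lambda/(4M))\|c\|\ge 0$ for $c\in C$, so $g\in C^*$, and $f$ lies in the interior of $C^*$.

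For (iii) $\Rightarrow$ (i), fix $f\in\mathrm{int}(C^*)$ together with $\delta>0$ such that $f+h\in C^*$ for every $h\in X^*$ with $\|h\|\le\delta$. Applied to both $h$ and $-h$, this gives $f(c)\ge |h(c)|$ for every $c\in C$, and taking the supremum over the dual ball of radius $\delta$ yields $f(c)\ge\delta\|c\|$ for every $c\in C$. For arbitrary $\epsilon>0$, the slice $S_\epsilon=C\cap\{f<\epsilon\}$ contains $\cero$ and every $c\in S_\epsilon$ satisfies $\|c\|<\epsilon/\delta$, so $\mathrm{diam}(S_\epsilon)\le 2\epsilon/\delta$; letting $\epsilon\to 0^+$ we obtain slices of $C$ through $\cero$ of arbitrarily small diameter, i.e., $\cero$ is a denting point of $C$.

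The main obstacle I anticipate is the linear lower bound $f(c)\ge\eta\|c\|$ in the proof of (ii) $\Rightarrow$ (iii); without it a perturbation $g$ of $f$ need not stay nonnegative on the unbounded cone $C$, and the argument would collapse. The scaling step that converts boundedness of one slice into this global estimate is precisely the place where the cone structure and the specific value $\lambda>0$ are used, and it is what makes the implication substantive beyond what is available for general convex sets.
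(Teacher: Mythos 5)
Your proof is correct. Note, however, that the paper does not prove this theorem at all: it is imported verbatim from \cite[Theorem 1.1]{GARCIACASTANO20151178}, so there is no in-paper argument to compare against; your direct cyclic proof (i)$\Rightarrow$(ii)$\Rightarrow$(iii)$\Rightarrow$(i), with the key quantitative step $f(c)\ge(\lambda/(2M))\|c\|$ obtained by rescaling into the bounded slice, is a sound self-contained substitute. The only point left tacit is that $\lambda>0$ in the implication (ii)$\Rightarrow$(iii); this does follow, since once $f\ge 0$ on $C$ the nonemptiness of $S=C\cap\{f<\lambda\}$ forces $\lambda>f(c)\ge 0$ for some $c\in C$, and it is worth stating since the scaling $\alpha=\lambda/(2f(c))$ depends on it.
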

Next, we provide the first of our main results. Before, we need to fix some notation. Let us fix a convex set $A$ and $x\in A$. We say that $x$ is an \emph{extreme point} of $A$ if $x$ does not belong to any non degenerate line segment in $A$. On the other hand, let us remind that a cone $C$ is said to be \emph{pointed} if $C\cap (-C)=\{\cero\}$ (equivalently, if $\cero$ is an extreme point of $C$). It is clear that if $\cero$ is a denting point of a cone $C$, then $C$ is pointed. Let $C\subset X$ be a cone, \emph{the order on $X$ given by $C$} is defined as $x\leq y$ $\Leftrightarrow y-x\in C$, $\forall x$, $y \in X$. In this context it is defined the \emph{order interval} $[x,y]$ as the set $\{z\in X \colon x\leq z \leq y\}$ and it is said that $c\in C$ is an \emph{order unit} of $C$ if $X=\bigcup_{n\geq 1}[-nc,nc]$. Given any cone in a normed space, we always consider the corresponding order on the normed space given by such a cone. 

\begin{theorem}\label{Teorema_caracte_denting}
Let $X$ be a normed space and $C\subset X$ a pointed cone. The following are equivalent:
\item[(i)] $0_X$ is a denting point of $C$.
\item[(ii)] There exist $n \in \N$, $\{f_i\}_{i=1}^n\subset C^{*}$, and $\{\lambda_i\}_{i=1}^n\subset (0,+\infty)$ such that the set $\bigcap_{i=1}^n\{f_i<\lambda_i\}\cap C$ is bounded.
\item[(iii)] $\cero$ is a point of continuity for $C$ and $C\in \mathrm{q}\mathcal{G}^*$.
\item[(iv)] $C^*$ has an order unit.
\item[(v)] There exists $\{f_n\}_{n\geq 1}\subset C^*$ such that $X^*=\bigcup_{n\geq 1}[-nf_n,nf_n]$. 
\end{theorem}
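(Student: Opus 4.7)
The plan is to exploit Theorem \ref{Tma_JMAA}, which equates (i) with $\mathrm{int}(C^*)\neq\emptyset$, and to arrange the five conditions around (i) as three short cycles. The equivalence (i) $\Leftrightarrow$ (ii) is cheap: (i) $\Rightarrow$ (ii) amounts to choosing $n=1$ and using the bounded slice supplied by Theorem \ref{Tma_JMAA}, while for (ii) $\Rightarrow$ (i) I would set $f:=\sum_{i=1}^{n}\lambda_i^{-1}f_i\in C^*$; since every summand is nonnegative on $C$, $\{f<1\}\cap C\subset\bigcap_{i}\{f_i<\lambda_i\}\cap C$ is bounded, producing a bounded slice and hence (i) by Theorem \ref{Tma_JMAA}.

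For (i) $\Leftrightarrow$ (iii) the forward direction is easy: shrinking slices show $\cero$ is a point of continuity, and picking $f_0\in\mathrm{int}(C^*)$ from Theorem \ref{Tma_JMAA} with $f_0+\epsilon B_{X^*}\subset C^*$ gives $\alpha f_0\pm g\in C^*$ for every $g\in X^*$ and $\alpha>\|g\|/\epsilon$, so $C^*-C^*=X^*$ and $C\in\mathrm{q}\mathcal{G}^*$. The main obstacle is (iii) $\Rightarrow$ (i). The point-of-continuity hypothesis supplies $g_1,\ldots,g_n\in X^*$ and $\eta>0$ such that $\{x\in C:|g_i(x)|<\eta\text{ for all }i\}\subset B_1(\cero)$; I would exploit the cone structure (scaling $x\in C$ by $\eta/(2\max_i|g_i(x)|)$, and observing that $x\in C$ with $g_i(x)=0$ for all $i$ forces $x=\cero$) to upgrade this to a linear bound
\[
\|x\|\leq K\max_{i}|g_i(x)|\quad\text{for every }x\in C,
\]
with $K=2/\eta$. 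Invoking $\overline{C^*-C^*}=X^*$, decompose $g_i=h_i^{+}-h_i^{-}+r_i$ with $h_i^{\pm}\in C^*$ and $\|r_i\|<\delta$ for $\delta$ so small that $K\delta<1$. Since $h_i^{\pm}\geq 0$ on $C$, we have $|g_i(x)|\leq (h_i^{+}+h_i^{-})(x)+\delta\|x\|$, and combining with the previous bound yields $(1-K\delta)\|x\|\leq K f(x)$ on $C$, where $f:=\sum_{i}(h_i^{+}+h_i^{-})\in C^*$. Hence $\{f<1\}\cap C$ is a bounded slice and (i) follows from Theorem \ref{Tma_JMAA}.

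For (i) $\Leftrightarrow$ (iv) $\Leftrightarrow$ (v), I would once more reduce, via Theorem \ref{Tma_JMAA}, to the nonemptiness of $\mathrm{int}(C^*)$. If $f_0\in\mathrm{int}(C^*)$ and $f_0+\epsilon B_{X^*}\subset C^*$, then $nf_0\pm g\in C^*$ for $n\geq 2\|g\|/\epsilon$, so $g\in[-nf_0,nf_0]$, showing $f_0$ is an order unit of $C^*$; this delivers (iv) and, on taking $f_n:=f_0$, also (v). Conversely, under (v), $X^*=\bigcup_{n\geq 1}[-nf_n,nf_n]$ is a countable union of the norm-closed convex sets $(nf_n-C^*)\cap(-nf_n+C^*)$ in the Banach space $X^*$; by Baire, one of them has nonempty interior, and a translation shows $C^*$ itself has nonempty interior, giving (i). The implication (iv) $\Rightarrow$ (i) is then the special case in which every $f_n$ equals the order unit.
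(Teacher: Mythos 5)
Your proposal is correct, and its overall skeleton matches the paper's: every condition is routed through Theorem \ref{Tma_JMAA}, i.e.\ through the nonemptiness of $\mathrm{Int}\,C^*$. The treatments of (ii), (iv) and (v) are essentially the paper's (the Baire-category step for (v)$\Rightarrow$(i) is exactly Lemma \ref{lema_order_unit}, applied to the norm-closed cone $C^*$ in the Banach space $X^*$). The genuine divergence is in (iii)$\Rightarrow$(i), the only hard implication. The paper passes to the bidual: it shows $C^{**}$ is pointed using $\overline{C^*-C^*}=X^*$ together with Lemma \ref{Lemm_cono_denso_en_cono_bidual}, then applies Choquet's lemma to the weak$^*$-compact truncated cone $C_R^{**}$ (Proposition \ref{Prop_bidual_pointed_implica_slices_base}) to see that slices form a weak neighbourhood base of $\cero$ in $C_R$, and finally combines this with the point-of-continuity hypothesis via a scaling argument. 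You instead quantify the point-of-continuity condition into the homogeneous estimate $\|x\|\leq K\max_i|g_i(x)|$ on $C$ and then perturb the $g_i$ into $C^*-C^*$ using the density hypothesis, producing a single $f\in C^*$ with $f(x)\geq c\|x\|$ on $C$. This is more elementary (no bidual, no Choquet's lemma, no weak$^*$ compactness) and in fact delivers more: a functional that is uniformly positive on $C$, i.e.\ a bounded base, directly. Both your intermediate steps check out: the implication ``$g_i(x)=0$ for all $i$ forces $x=\cero$'' and the scaling that yields the linear bound are valid, and the inequality $|g_i(x)|\leq(h_i^++h_i^-)(x)+\delta\|x\|$ on $C$ is correct since $|a-b|\leq a+b$ for $a,b\geq 0$.

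One small point to make explicit in (i)$\Rightarrow$(ii): Theorem \ref{Tma_JMAA}(ii) only asserts the existence of a bounded slice $\{f<\lambda\}\cap C$ with $f\in X^*$, whereas condition (ii) requires $f\in C^*$ and $\lambda>0$. This is not automatic from the statement you quote; you need the (easy) observation that if $f(c)<0$ for some $c\in C$ then the ray $\R_+c$ eventually lies in the slice, contradicting boundedness, so $f\in C^*$ and then $\lambda>0$ follows from nonemptiness. The paper handles this by citing \cite[Lemmas~2.1, 2.2 and Proposition~2.4]{GARCIACASTANO20151178}; alternatively, you could take $f_0\in\mathrm{Int}\,C^*$ with $f_0+\epsilon B_{X^*}\subset C^*$ and note $f_0(x)\geq\epsilon\|x\|$ on $C$, which gives (ii) with $n=1$ directly. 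With that sentence added, the argument is complete.
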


The main contributions of the former result with respect to \cite[Theorem~1.1]{GARCIACASTANO20151178} are, on the one hand, the use of quasi-generating dual cones as a qualification condition for the equivalence between point of continuity and dentability which has a wider range of applicability --as we will see later--. On the other hand, we state explicitly the relation of quasi-generating dual cones with order units. Let us remind that $c \in  C$ is said to be a \emph{quasi-interior point of $X$} if $\overline{\bigcup_{n \geq 1}[-nc,nc]}=X$. The set of all quasi-interior points is denoted by $\qiC$.  It is clear that $\qidual \not = \emptyset$ implies $C\in \mathrm{q}\mathcal{G}^*$. Thus, the equivalence between assertions (i) and (iii) above generalizes \cite[Theorem 4]{Kountzakis2006}. After the examples in Section \ref{Secc_Examples} it is clear that we can not remove the hypothesis of point of continuity from (iii). Finally, let us note that assertions (ii), (iv), and (v) suggest that for $\cero$ to be a denting point of $C$ it is only necessary to have enough positive elements in $X^*$, i. e., that $C$ has a ``large enough'' dual cone.

The following lemmata will be required in the proof of Theorem \ref{Teorema_caracte_denting}. The first two lemmas are known, however we prove the second one for the completeness of the text.
\begin{lemma}\label{lema_interval_1}
Let $X$ be a normed space, $x \in X$, and $C \subset X$ a cone. Then $[-x,x]$ is a convex and symmetric set. Besides, $[-x,x]\not = \emptyset$ if and only if $\cero \in [-x,x]$ if and only if $x \in C$.
\end{lemma}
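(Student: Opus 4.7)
My plan is to unpack the order directly and do a short bookkeeping argument. By the definition of the order induced by $C$, we have
\begin{equation*}
z\in[-x,x]\ \Longleftrightarrow\ z+x\in C\ \text{and}\ x-z\in C.
\end{equation*}
A preliminary observation I would record at the outset is that every cone $C$ (in the sense of this paper) is closed under addition: if $a,b\in C$, convexity gives $\tfrac12 a+\tfrac12 b\in C$, and then $\alpha C\subset C$ with $\alpha=2$ yields $a+b\in C$. This is the only property of $C$ beyond its definition that the proof will use.

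With this in hand, convexity of $[-x,x]$ is immediate: for $z_1,z_2\in[-x,x]$ and $\lambda\in[0,1]$,
\begin{equation*}
(\lambda z_1+(1-\lambda)z_2)+x=\lambda(z_1+x)+(1-\lambda)(z_2+x)\in C,
\end{equation*}
and analogously $x-(\lambda z_1+(1-\lambda)z_2)\in C$, using convexity of $C$ together with the addition closure above. Symmetry is even quicker: the condition ``$z+x\in C$ and $x-z\in C$'' is invariant under $z\mapsto -z$.

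For the chain of equivalences, the middle one $\cero\in[-x,x]\Leftrightarrow x\in C$ follows at once, since the two defining membership conditions both collapse to $x\in C$ when $z=\cero$. The nontrivial direction of the other equivalence is $[-x,x]\neq\emptyset\Rightarrow\cero\in[-x,x]$: pick any $z\in[-x,x]$; by symmetry $-z\in[-x,x]$; by convexity their midpoint $\cero=\tfrac12 z+\tfrac12(-z)$ lies in $[-x,x]$. The reverse implication is trivial. I do not anticipate any real obstacle here; the only point that requires a moment's care is verifying addition-closure of $C$ from its definition, as noted above.
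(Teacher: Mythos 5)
Your proof is correct. Note that the paper itself states this lemma without proof (it is dismissed as ``known''), so there is nothing to compare against; your direct verification via the identity $[-x,x]=(-x+C)\cap(x-C)$, i.e.\ $z\in[-x,x]\Leftrightarrow z+x\in C$ and $x-z\in C$, is exactly the natural argument, and the symmetry-plus-midpoint trick for $[-x,x]\neq\emptyset\Rightarrow\cero\in[-x,x]$ is the right way to get the nontrivial implication. One small remark: the addition-closure observation you flag as the key preliminary is never actually needed --- in both displayed verifications the point $\lambda(z_1+x)+(1-\lambda)(z_2+x)$ (and likewise $\lambda(x-z_1)+(1-\lambda)(x-z_2)$) is already a convex combination of two elements of $C$, so convexity of $C$ alone suffices; you could delete that paragraph without loss.
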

Here and subsequently, Int$A$ stands for the interior of the set $A \subset X$.
\begin{lemma}\label{lema_interval_2}
Let $X$ be a normed space, $x \in X$, and $C \subset X$ a cone. The following are equivalent:
\begin{itemize}
\item[(i)] Int$[-x,x]\not = \emptyset$.
\item[(ii)] $\cero \in $ Int$[-x,x]$.
\item[(iii)] $x \in$ Int$C$.
\end{itemize}
\end{lemma}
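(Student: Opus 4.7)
The plan is to exploit the identity $[-x,x]=(x-C)\cap(-x+C)$, together with the symmetry of the interval established in Lemma~\ref{lema_interval_1}, and then chain together (i)$\Rightarrow$(ii)$\Leftrightarrow$(iii), with (ii)$\Rightarrow$(i) being trivial.

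First I would dispatch (i)$\Rightarrow$(ii). Suppose $y\in\mathrm{Int}[-x,x]$. By Lemma~\ref{lema_interval_1} the interval is convex and symmetric, so $-y\in\mathrm{Int}[-x,x]$ as well. The standard fact that the midpoint of an interior point and a point of a convex set lies in the interior then gives $\cero=\tfrac{1}{2}y+\tfrac{1}{2}(-y)\in\mathrm{Int}[-x,x]$. This is where the symmetry from Lemma~\ref{lema_interval_1} does all the work.

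Next I would prove the equivalence (ii)$\Leftrightarrow$(iii) by a direct translation argument. Assume (iii), so there exists $\varepsilon>0$ with $B_{\varepsilon}(x)\subset C$. Given $z\in B_{\varepsilon}(\cero)$, both $x+z$ and $x-z$ lie in $B_{\varepsilon}(x)$, hence in $C$; this means $z\in -x+C$ and $z\in x-C$, so $B_{\varepsilon}(\cero)\subset[-x,x]$, yielding (ii). Conversely, assume (ii) and pick $\varepsilon>0$ with $B_{\varepsilon}(\cero)\subset[-x,x]\subset -x+C$. For each $z\in B_{\varepsilon}(\cero)$ this gives $x+z\in C$, i.e.\ $B_{\varepsilon}(x)\subset C$, which is (iii).

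There is no real obstacle here: the only subtlety is being careful that the symmetry argument in the first step uses the full strength of convexity plus an interior point (not merely that $\cero\in[-x,x]$), and that the translation argument correctly interchanges $x+z$ and $x-z$ so that a single radius $\varepsilon$ works for both factors $x-C$ and $-x+C$ of the interval. The proof should fit in a few lines.
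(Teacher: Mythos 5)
Your proposal is correct and follows essentially the same route as the paper: both rest on the identity $[-x,x]=(-x+C)\cap(x-C)$, use the symmetry from Lemma~\ref{lema_interval_1} (plus convexity) to push an arbitrary interior point to $\cero$, and then translate a ball at $\cero$ to a ball at $x$ and back. Your version merely makes explicit the midpoint/convexity step and the pointwise use of $x\pm z$ where the paper argues with the symmetric ball $\epsilon B_X$ directly.
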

\begin{proof}
(i)$\Rightarrow$(ii). If $z \in$ Int$[-x,x]$, then $-z \in $ Int$[-x,x]$, so $\cero \in $ Int$[-x,x]$. (ii)$\Rightarrow$(iii). If $\exists \epsilon>0$ such that $\epsilon B_X\subset [-x,x]=(-x+C)\cap (x-C)$, then clearly $B_{\epsilon}(x)\subset C$. (iii)$\Rightarrow$(ii). Assume $\exists \epsilon>0$ such that $B_{\epsilon}(x)\subset C$. Then $\epsilon B_X\subset -x+C$. Since $B_X$ is symmetric, $\epsilon B_X\subset x-C$. Thus, $\cero \in $ Int$[-x,x]$. 
\end{proof}
\begin{lemma}\label{lema_order_unit}
Let $X$ be a Banach space and $C \subset X$ a closed cone. The following statements hold true:
\begin{itemize}
\item[(i)] $x$ is an order unit of $C$ if and only if $x \in$Int$C$.
\item[(ii)] There exists $(c_n)_n \subset C$ such that $\bigcup_{n \geq 1}[-nc_n,nc_n]=X$ if and only if Int$C$ is not empty.
\end{itemize}
\end{lemma}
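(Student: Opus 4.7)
The plan is to derive both assertions from Lemma \ref{lema_interval_2} combined with the Baire category theorem, which is precisely where the completeness of $X$ and the closedness of $C$ come in. For the forward direction of (i), starting from $x \in \mathrm{Int}\,C$, Lemma \ref{lema_interval_2} gives $\cero \in \mathrm{Int}[-x,x]$, so some $\epsilon B_X$ is contained in $[-x,x]$. Given any $y \in X$, I would pick $n \in \N$ with $y/n \in \epsilon B_X$, so that $y \in n[-x,x] = [-nx,nx]$ (the identity $n[-x,x]=[-nx,nx]$ uses only that $\alpha C = C$ for $\alpha>0$, a consequence of the cone axiom applied to both $\alpha$ and $\alpha^{-1}$). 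Hence $X = \bigcup_{n \geq 1}[-nx,nx]$ and $x$ is an order unit of $C$.

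For the converse in (i), I would write $X = \bigcup_{n \geq 1}[-nx,nx]$ and note that each order interval $[-nx,nx] = (-nx+C)\cap(nx-C)$ is norm-closed because $C$ is closed. Since $X$ is Banach, Baire's theorem forces some $[-n_0 x, n_0 x]$ to have nonempty interior, whence Lemma \ref{lema_interval_2} yields $n_0 x \in \mathrm{Int}\,C$; multiplying by $1/n_0$ and using $\alpha C = C$ once more gives $x \in \mathrm{Int}\,C$.

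For assertion (ii), the implication $\mathrm{Int}\,C \neq \emptyset \Rightarrow$ existence of $(c_n)_n$ is a direct corollary of (i): pick $c \in \mathrm{Int}\,C$, use (i) to see that $c$ is an order unit of $C$, and set $c_n = c$ for every $n$. The converse mirrors the Baire step above: the sets $[-nc_n,nc_n]$ are closed, their union is $X$, so some $[-n_0 c_{n_0}, n_0 c_{n_0}]$ has nonempty interior, and Lemma \ref{lema_interval_2} supplies $n_0 c_{n_0} \in \mathrm{Int}\,C$. No serious obstacle is anticipated; the only point requiring careful bookkeeping is the cone-scaling identity $\alpha C = C$ for $\alpha > 0$, used both to pass from $[-x,x]$ to $[-nx,nx]$ and to move the interior point from $n_0 x$ back to $x$.
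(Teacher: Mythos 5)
Your proposal is correct and follows essentially the same route as the paper: both directions of (i) rest on Lemma \ref{lema_interval_2}, with the Baire category theorem applied to the closed order intervals $[-nx,nx]=(-nx+C)\cap(nx-C)$ for the "order unit $\Rightarrow$ interior" implication, and (ii) is handled identically. You merely make explicit two small steps the paper leaves implicit (closedness of the order intervals and the rescaling $n_0x\in\mathrm{Int}\,C\Rightarrow x\in\mathrm{Int}\,C$ via $\alpha C=C$), which is fine.
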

\begin{proof}
(i) $\Rightarrow$. Assume that $x$ is an order unit. Then, by the Baire category theorem, there exists $n \in \N$ such that $[-nx,nx]$ has a nonempty interior. Now, by Lemma~\ref{lema_interval_2}, $x \in$ Int$C$. Let us prove $\Leftarrow$. If $x \in$ Int$C$, then, again by Lemma \ref{lema_interval_2}, $[-x,x]$ has a nonempty interior. Since $[-x,x]$ is a convex and symmetric subset of $X$, we get that $x$ is an order unit. 

(ii) $\Rightarrow$. Again by the Baire category theorem, there exists $n$ such that $[-c_n,c_n]$ has a nonempty interior. Now, Lemma \ref{lema_interval_2} applies and $c_n \in$ Int$C$. To finish, $\Leftarrow$ is a consequence of (i).
\end{proof}

Let us denote by $\overline{C}^{\mbox{weak}^*}$ the closure of $C$ in $X^{**}$ respect to weak$^*$ topology.
\begin{lemma}\label{Lemm_cono_denso_en_cono_bidual} 
Let $C$ be a cone in a normed space $X$. Then $\overline{C}^{\mbox{weak}^*}=C^{**}$.
\end{lemma}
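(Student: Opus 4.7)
The plan is to prove the two inclusions separately, treating $C$ as a subset of $X^{**}$ via the canonical embedding.

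For the inclusion $\overline{C}^{\mathrm{weak}^*}\subset C^{**}$, I would first observe that $C^{**}$ is weak$^*$-closed in $X^{**}$, since it is the intersection of the weak$^*$-closed half-spaces $\{T\in X^{**}: T(f)\geq 0\}$ as $f$ ranges over $C^*$. Next, I would check that $C\subset C^{**}$ under the canonical embedding: for $c\in C$ and $f\in C^*$, the evaluation $\widehat{c}(f)=f(c)\geq 0$ by definition of $C^*$. The first inclusion then follows immediately.

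For the reverse inclusion $C^{**}\subset \overline{C}^{\mathrm{weak}^*}$, I would argue by contradiction. Suppose there exists $T_0\in C^{**}\setminus \overline{C}^{\mathrm{weak}^*}$. Since $\overline{C}^{\mathrm{weak}^*}$ is convex and weak$^*$-closed in the locally convex topological vector space $(X^{**},\mathrm{weak}^*)$, the Hahn--Banach separation theorem produces a weak$^*$-continuous linear functional on $X^{**}$ and a scalar $\alpha\in\R$ strictly separating $T_0$ from $\overline{C}^{\mathrm{weak}^*}$. The key identification here is that the weak$^*$-continuous linear functionals on $X^{**}$ are exactly the evaluations $T\mapsto T(f)$ with $f\in X^*$; so I obtain $f\in X^*$ with
\[
T_0(f)<\alpha\leq T(f) \quad \text{for all } T\in \overline{C}^{\mathrm{weak}^*}.
\]
Plugging $T=\widehat{0}\in C\subset \overline{C}^{\mathrm{weak}^*}$ gives $\alpha\leq 0$. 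Using the cone property, for any $c\in C$ and $n\in\N$ we have $nc\in C$ and hence $nf(c)\geq \alpha$; dividing by $n$ and letting $n\to\infty$ forces $f(c)\geq 0$, i.e., $f\in C^*$. Since $T_0\in C^{**}$, this yields $T_0(f)\geq 0\geq \alpha$, contradicting $T_0(f)<\alpha$.

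The only potential obstacle is the use of the standard duality fact that weak$^*$-continuous functionals on $X^{**}$ are precisely evaluations at elements of $X^*$; the rest is a routine application of Hahn--Banach plus the homogeneity of cones. The argument requires no completeness assumption on $X$, so the lemma holds in the full generality stated.
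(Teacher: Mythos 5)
Your proof is correct and follows essentially the same route as the paper: the easy inclusion from weak$^*$-closedness of $C^{**}$, then Hahn--Banach separation in $(X^{**},\mathrm{weak}^*)$ combined with the homogeneity of $C$ to show the separating functional lies in $C^*$, contradicting $T_0\in C^{**}$. Your way of deducing $f\in C^*$ (dividing $nf(c)\geq\alpha$ by $n$ and letting $n\to\infty$) is a slightly cleaner packaging of the paper's two-case argument on $\inf\{f(c):c\in C\}$, but the underlying idea is identical.
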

\begin{proof}
Since $C\subset C^{**}$ and $C^{**}$ is weak$^*$-closed, it follows that $\overline{C}^{\mbox{weak}^*}\subset C^{**}$. We will prove the other inclusion by contradiction. Assume there exists $T \in C^{**} \setminus \overline{C}^{\mbox{weak}^*}$. By the Hahn-Banach theorem, there exist $f\in X^*$ and $\alpha \in \R$ such that 
\begin{equation}\label{eq_lem}
T(f)<\alpha \leq \inf\{f(x)\colon x \in  \overline{C}^{\mbox{weak}^*}\}\leq  \inf\{f(c)\colon c \in  C\}\leq 0.
\end{equation}
If $\inf\{f(c)\colon c \in  C\}=0$, then $f\in C^*$ which contradicts $T \in C^{**}$ because by (\ref{eq_lem}) we have $T(f)<\alpha \leq 0$. Consequently $\inf\{f(c)\colon c \in  C\}<0$. Hence, there exist $r<0$ and $c\in C$ such that $f(c)=r<0$. Since $nc\in C$, $\forall n \geq 1$, it follows that $\alpha \leq \inf\{f(c)\colon c \in  C\}\leq f(nc)=nr<0$, $\forall n \geq 1$, which is impossible.
\end{proof}
The following result connects extremity properties of the bidual cone to the existence of small slices containing the origin in the primal cone. This is the key to link cones with a large dual $C^*$, to  dentability properties of $C$ --as we will see in the proof of Theorem \ref{Teorema_caracte_denting}--.
\begin{proposition}\label{Prop_bidual_pointed_implica_slices_base}
Let $X$ be a normed space and $C\subset X$ a cone. If $C^{**}$ is pointed, then for every $R>0$ and $C_R:=C\cap \overline{B_R(\cero)}$, the family of open slices containing $\cero$ forms a neighbourhood base for $\cero$ relative to $(C_R,\mbox{weak})$.
\end{proposition}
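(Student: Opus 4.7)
My plan is to argue via a Choquet--Milman style separation in the bidual. Canonically embedding $C_R$ into $X^{**}$, let $K$ denote the weak$^*$-closure of $C_R$ in $X^{**}$. Since $C^{**}$ is weak$^*$-closed and contains $C_R$, we have $K\subset C^{**}$; since $C_R$ is norm-bounded by $R$, Alaoglu places $K$ inside the weak$^*$-compact ball $R\,\overline{B_{X^{**}}}$, so $K$ is convex and weak$^*$-compact. Because the canonical inclusion $(X,\mbox{weak})\hookrightarrow(X^{**},w^*)$ is a topological embedding (both topologies are generated by the same family of functionals in $X^*$), the weak topology on $C_R$ coincides with the trace on $C_R$ of the weak$^*$-topology of $K$.

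I next exploit that $C^{**}$ is pointed, which is equivalent to $\cero$ being an extreme point of $C^{**}$; extremity descends to every convex subset containing $\cero$, so $\cero$ is extreme in $K$.

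Given a weak-open neighbourhood $U$ of $\cero$ in $C_R$, choose a weak$^*$-open $V\subset X^{**}$ with $V\cap C_R=U$, and set $L:=K\setminus V$, which is weak$^*$-compact and avoids $\cero$. The key step is to establish that $\cero\notin M$, where $M:=\overline{\mathrm{co}}^{w^*}(L)\subset K$ (itself weak$^*$-compact as a weak$^*$-closed subset of $K$). If $\cero$ were in $M$, then $\cero$ would be an extreme point of $M$ as well, and Milman's theorem applied to the representation $M=\overline{\mathrm{co}}^{w^*}(L)$ with $L$ weak$^*$-compact would force $\cero\in\overline{L}^{w^*}=L$, contradicting $\cero\in V$.

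With $\cero\notin M$ secured, Hahn--Banach in the locally convex space $(X^{**},w^*)$ strictly separates $\cero$ from $M$. As the weak$^*$-continuous linear functionals on $X^{**}$ are precisely the canonical images of elements of $X^*$, one obtains $f\in X^*$ and $\lambda>0$ with $\cero\in\{T\in K:T(f)<\lambda\}$ and this set disjoint from $L$; intersecting with $C_R$ produces the desired open slice $\{x\in C_R:f(x)<\lambda\}$ of $C_R$ containing $\cero$ and contained in $U$. I expect no substantial obstacle: the only delicate bookkeeping is the identification of the weak and weak$^*$ topologies under the canonical embedding, which is routine.
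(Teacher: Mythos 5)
Your proof is correct and follows essentially the same route as the paper: pass to the bidual, use that pointedness of $C^{**}$ makes $\cero$ an extreme point of a weak$^*$-compact convex set containing $C_R$, and extract a small slice whose trace on $X$ is the desired slice of $C_R$. The only difference is that the paper invokes Choquet's lemma directly at this point, whereas you reprove it inline via Milman's theorem and Hahn--Banach separation in $(X^{**},w^*)$ (with the harmless, trivially handled edge case $L=\emptyset$ left implicit).
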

\begin{proof}
Fix $R>0$ and $C_R \subset C$ as in the statement. Let us denote  by $\parallel \cdot \parallel_{**}$ the norm on the bidual $X^{**}$ and define the weak$^*$-compact set
\[C_R^{**}:=\{T\in C^{**}\colon \parallel T\parallel_{**} \leq R\}\subset X^{**}.\]
Clearly $C_R=C_R^{**}\cap X$ and $\cero$ is an extreme point of $C_R^{**}$. Let $W$ be a weak-neigbourhood of $\cero$ in $X$. It is not restrictive to assume that there exist $\{f_i\}_{i=1}^n\subset X^*$ and $\lambda\in \R$ such that $W=\cap_{i=1}^n\{x \in X \colon f_i(x)<\lambda\}$. We define the weak$^*$ open set $W^{**}:=\cap_{i=1}^n\{T\in X^{**} \colon T(f_i)<\lambda\}.$ 
By Choquet's lemma there exist $f \in X^*$ and $\alpha \in \R$ such that $\cero \in \{T\in C_R^{**}\colon T(f)< \alpha\} \subset  W^{**} \cap C_R^{**}$. Clearly, $\cero \in \{x \in C_R \colon f(x)<\alpha\} \subset W\cap C_R$, which finishes the proof. 
\end{proof}

\begin{proof}[Proof of Theorem \ref{Teorema_caracte_denting}] 

(i)$\Rightarrow$(ii).  By (i)$\Rightarrow$(ii) in Theorem \ref{Tma_JMAA}, $C$ has a bounded slice $S$. By \cite[Lemmas~2.1 and 2.2]{GARCIACASTANO20151178}, there exist $f\in X^*$ and $\lambda>0$ such that $S=\{f<\lambda\}\cap C$. Finally, by \cite[Proposition 2.4]{GARCIACASTANO20151178}, $f\in C^*$.

(ii)$\Rightarrow$(i). Assume (ii) and fix $\lambda:=\min\{\lambda_i\colon 1\leq i \leq n\}>0$. Now, we define the relatively weakly open set $V:=\bigcap_{i=1}^n\{f_i<n\lambda \}\cap C$ in $C$ and the functional $f:=\frac{1}{n}\sum_{i=1}^nf_i$. It is clear that $V$ is bounded, $f\in C^*$, and $\{f<\lambda\}\cap C \subset V$. Then, $C$ has a bounded slice. Finally, (ii)$\Rightarrow$(i) in Theorem \ref{Tma_JMAA} applies. 

(i)$\Leftrightarrow$(iv). This is a consequence of (i)$\Leftrightarrow$(iii) in Theorem \ref{Tma_JMAA} and Lemma \ref{lema_order_unit} (i).

(i)$\Rightarrow$(iii). Clearly $\cero$ is a point of continuity for $C$. On the other hand, the former equivalence, (i)$\Leftrightarrow$(iv), implies that $C^*$ has an order unit. Now, by Lemma~\ref{lema_order_unit}~(i), we can assume that Int $C^*\not = \emptyset$. Therefore, $C^*-C^*$ is a subspace of $X^*$ with a nonempty interior. Hence, $C^*-C^*=X^*$ which yields $C\in \mathrm{q}\mathcal{G}^*$. 

(iii)$\Rightarrow$(i). Consider again $\overline{C}^{\mbox{weak}^*}$, i.e.  the closure of $C$ in $X^{**}$ respect to the  weak$^*$ topology.  Assume $C\in $ q$\mathcal{G}^*$ and pick any $x^{**}\in \overline{C}^{\mbox{weak}^*}\cap (-\overline{C}^{\mbox{weak}^*})\subset X^{**}$. Then we have, simultaneously $x^{**}(f)\geq 0$ and $x^{**}(-f)\geq 0$, $\forall f \in C^*$. Hence $x^{**}=0$, which means that the cone $\overline{C}^{\mbox{weak}^*}$ in $X^{**}$ is pointed. Then, by Lemma~\ref{Lemm_cono_denso_en_cono_bidual}, $C^{**}$ is pointed. Let us fix now $\epsilon>0$. Next, we will find a slice on $C$ containing $\cero$ and having diameter smaller than $\epsilon$. By $W$ we denote a weak neighbourhood of $\cero$. Now, since $\cero$ is a point of continuity for $C$, we assume that $\cero \in W\cap C \subset C_{\frac{\epsilon}{4}}$ (making use of the notation in Proposition \ref{Prop_bidual_pointed_implica_slices_base}). Now, applying Proposition \ref{Prop_bidual_pointed_implica_slices_base} to $W$ and $R=\frac{\epsilon}{2}$, we have $f\in X^*$ and $\lambda>0$ such that $\{f<\lambda\}\cap C_{\frac{\epsilon}{2}} \subset W \cap C_{\frac{\epsilon}{2}}\subset C_{\frac{\epsilon}{4}}$. Hence $\{f<\lambda\}\cap C \subset C_{\frac{\epsilon}{2}}$. Indeed, if $c\in \{f<\lambda\}\cap C$ and $\parallel c \parallel > \frac{\epsilon}{2}$, then $c_0=\frac{3\epsilon}{8}\frac{c}{\parallel c \parallel} \in \{f<\lambda\}\cap C_{\frac{\epsilon}{2}} \setminus C_{\frac{\epsilon}{4}}$, a contradiction.

(iv)$\Rightarrow$(v). There is nothing to prove.

(v)$\Rightarrow$(i). By Lemma \ref{lema_order_unit} (ii), $\mathrm{Int}C^*\not = \emptyset$. Now, (iii)$\Rightarrow$(i) in Theorem \ref{Tma_JMAA} applies.
\end{proof}

Let us recall the following notion from \cite{Guirao2014}, notion which was used --at least implicitly-- by Kunen and Rosenthal in \cite{Kunen1982}. Let $A$ be a subset of a normed space $X$, a point $x \in A$ is called a \emph{weakly strongly extreme point} of $A$ if given two sequences $(a_n)_n$ and $(a'_n)_n$ in $A$ such that $\lim_n(a_n+a'_n)=2x$, then weak-$\lim_n a_n=x$. Now, we provide the local version of the former notion. We say that $x \in A$ is a \emph{locally weakly strongly extreme point} of $A$ if there exists a neighbourhood $U$ of $x$ such that  given two sequences $(a_n)_n$ and $(a'_n)_n$ in $A\cap U$ such that $\lim_n(a_n+a'_n)=2x$, then weak-$\lim_n a_n=x$. Such a notion will allow us to solve \cite[Problem~2.6]{GARCIACASTANO20151178} as we will see after the next result.
\begin{theorem}\label{Tma_Prop_Vicente_para_Cono_mejorada}
Let $X$ be a normed space, $C\subset X$ a pointed cone, $R>0$, and $C_R:=C\cap \overline{B_R(\cero)}$. Assume that the family of open slices containing $\cero$ forms a neighbourhood base for $\cero$ relative to $(C_R,\mbox{weak})$. Then $\cero$ is a locally weakly strongly extreme point of $C$.
\end{theorem}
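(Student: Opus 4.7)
The plan is to take $U := B_R(\cero)$ as the distinguished neighbourhood of the origin and verify that $\cero$ is locally weakly strongly extreme relative to $U$. Fix sequences $(a_n)_n, (a'_n)_n \subset C\cap U$ with $\lim_n(a_n+a'_n)=\cero$; then both sequences lie in $C_R$. To show $a_n\to\cero$ weakly, I will fix an arbitrary $f_0\in X^*$ and $\epsilon>0$ and aim to prove $|f_0(a_n)|<2\epsilon$ for all sufficiently large $n$.

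The strategy is to apply the hypothesis to the symmetric weak neighbourhood $W:=\{x\in X : |f_0(x)|<\epsilon\}$ of $\cero$. Since $W\cap C_R$ is a weak neighbourhood of $\cero$ in $C_R$, the hypothesis supplies a slice $S=\{f<\lambda\}\cap C$ with $\cero\in S$ (so $\lambda>0$, because $f(\cero)=0<\lambda$) and $S\cap C_R\subset W\cap C_R$.

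The core of the argument is the following dichotomy. By continuity of $f$, $f(a_n)+f(a'_n)=f(a_n+a'_n)\to 0$, so eventually $f(a_n)+f(a'_n)<2\lambda$, which precludes $f(a_n)\geq\lambda$ and $f(a'_n)\geq\lambda$ holding simultaneously. Thus, for large $n$, at least one of $a_n$ or $a'_n$ lies in $S$. If $a_n\in S$, then $a_n\in S\cap C_R\subset W$ and hence $|f_0(a_n)|<\epsilon$. If instead $a'_n\in S$, then $|f_0(a'_n)|<\epsilon$; combined with $|f_0(a_n+a'_n)|<\epsilon$ (valid eventually), the triangle inequality delivers
\[
|f_0(a_n)|\leq |f_0(a_n+a'_n)|+|f_0(a'_n)|<2\epsilon.
\]
In either case $|f_0(a_n)|<2\epsilon$ for large $n$, and since $\epsilon>0$ and $f_0\in X^*$ were arbitrary, $a_n\to\cero$ weakly.

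The step I expect to be the main obstacle is the identification of the symmetric weak neighbourhood $W$ together with the dichotomy that forces one of $a_n$ or $a'_n$ into the slice at each large index. This is the key device that sidesteps what would otherwise seem a necessary refinement, namely arranging $f\in C^*$ for the slice extracted from the hypothesis; that positivity is not obviously available in the setting of Theorem \ref{Tma_Prop_Vicente_para_Cono_mejorada}, and the symmetric choice of $W$ together with the triangle inequality in the ``$a'_n\in S$'' case provides a clean workaround.
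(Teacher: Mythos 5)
Your proof is correct. The engine is the same one the paper uses: extract from the hypothesis an open slice $\{f<\lambda\}\cap C_R$ containing $\cero$ (so $\lambda>0$), and exploit the fact that $f(a_n)+f(a'_n)\to 0<2\lambda$ forces at least one member of each pair into the slice. In particular, your worry about needing $f\in C^*$ is resolved in the paper by exactly the trick you use: if $f(a_n)\ge\lambda$ while $f(a_n+a'_n)<\lambda$, then $f(a'_n)<0<\lambda$, so the other element lands in the slice with no positivity assumption on $f$. Where you genuinely differ is in the bookkeeping. The paper argues by contradiction: it fixes a single slice avoided by all $c_n$, deduces that all $c'_n$ eventually lie in it, then invokes the neighbourhood-base hypothesis a second time (for arbitrary $g$ and $\mu$) to conclude weak-$\lim_n c'_n=\cero$, hence weak-$\lim_n c_n=\cero$, contradicting the choice of slice. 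Your argument is direct and functional-by-functional: for each $f_0$ and $\epsilon$ you apply the hypothesis once, to the symmetric neighbourhood $\{|f_0|<\epsilon\}$, and dispose of the case where the ``wrong'' element of the pair lands in the slice by the triangle inequality $|f_0(a_n)|\le|f_0(a_n+a'_n)|+|f_0(a'_n)|<2\epsilon$. This avoids the subsequence/contradiction scaffolding and the second appeal to the hypothesis, at the cost of checking convergence one functional at a time; it is, if anything, a cleaner write-up of the same underlying idea.
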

\begin{proof}
Let us consider two sequences $(c_n)_n$ and $(c'_n)_n$ in $C_R$ such that $\lim_{n \rightarrow +\infty}(c_n+c'_n)=\cero$. We will show that weak-$\lim_{n \rightarrow +\infty}c_n=\cero$. We claim that for every subsequence $(c_{n_k})_{k}$ of $(c_n)_n$, we have
\begin{equation}\label{ec_prop}
\cero \in \overline{\{c_{n_k}\colon k \geq 1\}}_,^{(C_R,\mbox{\tiny weak})}
\end{equation}
claim which implies weak-$\lim_{n \rightarrow +\infty}c_n=\cero$. Indeed, if the former limit either does not exist or it is not $\cero$, then there would exist $f\in X^*$ and $\lambda>0$ such that for every $k \geq 1$ we could choose $n_k \geq k$ such that the corresponding $c_{n_k}$ does not belong to the set $\{f<\lambda\}\cap C_R$. Then we could construct a subsequence $(c_{n_k})_{k}$ of $(c_n)_n$ verifying $c_{n_k} \not \in \{f<\lambda\}\cap C_R$ for every $k \geq 1$, contradicting (\ref{ec_prop}). Therefore we only need to show the above claim. For that purpose it suffices to prove (\ref{ec_prop}) for the initial sequence $(c_n)_n$ (the argument for any subsequence is the same). We will show it by contradiction. Assume that (\ref{ec_prop}) is not true for $(c_n)_n$. Then, there exist $f\in X^*$ and $\lambda>0$ such that 
\begin{equation}\label{ec2_prop_vicente_para_cono_mejorada}
c_{n} \not \in \{f<\lambda\}\cap C_R,\, \forall n \geq 1.
\end{equation}
However, by hypothesis there exists $n_0 \geq 1$ such that $c_n+c'_n \in \{f<\lambda\}\cap C_R$, $\forall n \geq n_0$; which by convexity yields $c'_n \in \{f<\lambda\}\cap C_R$, $\forall n \geq n_0$. Hence weak-$\lim_{n \rightarrow +\infty}c'_n=\cero$. Indeed, if we consider any $g \in X^*$ and $\mu>0$ there exists $n_1$ such that $c_n+c'_n \in \{g<\mu\}\cap \{f<\lambda\}\cap C_R$, $\forall n \geq n_1$. Thus \[c'_n \in \{g<\mu\}\cap \{f<\lambda\}\cap C_R\subset \{g<\mu\}\cap C_R,\,\forall n \geq n_1.\] Finally, weak-$\lim_{n \rightarrow +\infty}c'_n=\cero$ implies weak-$\lim_{n \rightarrow +\infty}c_n=\cero$ contradicting (\ref{ec2_prop_vicente_para_cono_mejorada}). Contradiction which comes from assuming that (\ref{ec_prop}) is not true for $(c_n)_n$. 
\end{proof}
\begin{remark}\label{Remark_Problem_Prop_Vicente_Mejorada}
Theorem \ref{Tma_Prop_Vicente_para_Cono_mejorada} solves \cite[Problem 2.6]{GARCIACASTANO20151178} in the negative because we reach the equivalence between the corresponding statements in \cite[Proposition 2.5]{GARCIACASTANO20151178} when we change condition (i) in \cite[Proposition 2.5]{GARCIACASTANO20151178} by ``$\cero$ is a locally weakly strongly extreme point of $C$''  even though $C$ is not closed.
\end{remark}
Let us remind from the introduction that \cite[Theorem]{Lin1988} was used in \cite{Daniilidis2000} to answer Gong's question in the context of Banach spaces. In the last part of this section we state and prove a new version of \cite[Theorem]{Lin1988} --labelled as Theorem~\ref{Teorema_caracte_denting_point}-- in which we have dropped down the hypothesis of completeness. In particular, we establish a characterization of dentability in terms of the notion of point of continuity in normed spaces and for arbitrary convex sets.  Let us fix a convex set $A$ and $x\in A$. Let us recall that $x$ is an extreme point of $A$ if $x$ does not belong to any non degenerate line segment in $A$. Fixed a subset $A \subset X$, we will denote by $\widetilde{A}$ the closure of $A$ in $X^{**}$ respect to the weak$^*$ topology. An extreme point $x$ of $A$ is said to be a \emph{preserved extreme point} of $A$ if $x$ is also an extreme point of the set $\widetilde{A}$ (see \cite{Guirao2014}).  The next result clarifies the relationship between the notion of preserved extreme point and that of locally weakly strongly extreme point. Such a result is a reformulation of \cite[Proposition 2.2]{Guirao2014} for a convex set (instead of the unit ball) in a normed space and it will be used in the proof of Theorem \ref{Teorema_caracte_denting_point}. From now on, $A_R(x):=A\cap \overline{B_R(x)}$, $\forall R>0$ and $x \in A$, being $A \subset X$ a subset.
\begin{proposition}\label{Prop_Vicente_mejorada}
Let $X$ be a normed space, $A\subset X$ a convex subset, and $x \in A$. Consider the following properties.
\item[(i)] $x$ is a locally weakly strongly extreme point of $A$.
\item[(ii)] $x$ is a preserved extreme point of $A$.
\item[(iii)] The family of open slices containing $x$ forms a neighbourhood base for $x$ relative to $(A_R(x),\mbox{weak})$, $\forall R>0$.
\bigskip

Then we have (i)$\Rightarrow$(ii)$\Rightarrow$(iii). Moreover, if $A$ is also closed, then the three properties above are equivalent.
\end{proposition}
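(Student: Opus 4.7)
The plan is to prove the three implications separately, broadly following the strategy of \cite[Proposition~2.2]{Guirao2014} (originally formulated for the unit ball of a normed space) adapted to a general convex subset. For \textbf{(i)$\Rightarrow$(ii)}, I would first establish that $x$ is extreme in $A$: given any decomposition $x=(a+a')/2$ with $a,a'\in A$ and $U$ the neighbourhood of $x$ witnessing local weak strong extremity, I pick $t\in(0,1)$ small enough so that $a_t:=(1-t)x+ta$ and $a'_t:=(1-t)x+ta'$ lie in $A\cap U$; the constant sequences $a_n:=a_t$, $a'_n:=a'_t$ then have sum exactly $2x$, so (i) forces $a_t=x$ and hence $a=x$ (likewise $a'=x$). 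For extremity in $\widetilde{A}$, I would assume $x=(T_1+T_2)/2$ in $\widetilde{A}$ with $T_1\neq T_2$, pick $f\in X^*$ with $T_1(f)>f(x)>T_2(f)$, and use the convexity of $\widetilde{A}$ to replace $T_i$ by $x+t(T_i-x)$ for $t>0$ small, keeping them inside $\widetilde{A}$ while making them close to $x$ in the bidual norm. Then, combining the $w^*$-density of $A$ in $\widetilde{A}$ with the principle of local reflexivity applied to the finite-dimensional span of $T_1,T_2,x$ and the functional $f$, I would construct sequences $(a_n),(a'_n)\subset A\cap U$ with $a_n+a'_n\to 2x$ in norm while $f(a_n)\to T_1(f)$. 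By (i), $a_n\to x$ weakly in $X$, contradicting $f(a_n)\to T_1(f)\neq f(x)$.

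For \textbf{(ii)$\Rightarrow$(iii)}, I would fix $R>0$ and let $K:=\widetilde{A_R(x)}\subset X^{**}$, which is $w^*$-compact and convex. Since $K\subset \widetilde{A}$ and $x$ is extreme in $\widetilde{A}$ by (ii), $x$ is extreme in $K$. Given a relative weak neighbourhood $W=\bigcap_{i=1}^{n}\{a\in A_R(x):|f_i(a)-f_i(x)|<\varepsilon\}$ of $x$ in $A_R(x)$, I extend it to the $w^*$-neighbourhood $W^{**}:=\bigcap_{i=1}^n\{T\in K:|T(f_i)-f_i(x)|<\varepsilon\}$ of $x$ in $K$. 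Applying Choquet's lemma to $(K,w^*)$ yields $f\in X^*$ and $\alpha\in\R$ with $x\in\{T\in K:T(f)<\alpha\}\subset W^{**}$, and intersecting this slice with $A_R(x)\subset K$ provides the desired slice of $A_R(x)$ through $x$ contained in $W$.

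For \textbf{(iii)$\Rightarrow$(i) under the additional assumption that $A$ is closed}, I would fix $R>0$, set $U=B_R(x)$, and consider sequences $(a_n),(a'_n)\subset A\cap U\subset A_R(x)$ with $a_n+a'_n\to 2x$. The crucial preliminary step is that (iii) implies $x$ is extreme in $K=\widetilde{A_R(x)}$: if $x=(T_1+T_2)/2$ in $K$ with $T_1\neq T_2$, pick $f\in X^*$ with $T_1(f)>f(x)>T_2(f)$, choose $\varepsilon$ strictly less than $\min\{T_1(f)-f(x),f(x)-T_2(f)\}$, and apply (iii) to the weak neighbourhood $V=\{a\in A_R(x):|f(a)-f(x)|<\varepsilon\}$ to obtain a slice $\{g<\mu\}\cap A_R(x)\subset V$ with $g(x)<\mu$; since $T_1(g)+T_2(g)=2g(x)<2\mu$, some $T_i(g)<\mu$, say $T_1$, and a net $a_\alpha\to T_1$ in $w^*$ within $A_R(x)$ lies eventually in $\{g<\mu\}\cap A_R(x)\subset V$, forcing $|T_1(f)-f(x)|\leq\varepsilon$ and contradicting the choice of $\varepsilon$. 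With $x$ extreme in $K$, passing to $w^*$-cluster points in the $w^*$-compact set $K$ shows every cluster point of $(a_n)$ coincides with $x$, and uniqueness of the cluster point in the compact $K$ then gives $a_n\to x$ in $w^*$, i.e., $a_n\to x$ weakly in $X$. Closedness of $A$ enters to ensure that $A_R(x)$ interacts cleanly with its bidual closure along this argument.

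The main obstacle is the bidual extremity step in \textbf{(i)$\Rightarrow$(ii)}: producing \emph{sequences} $(a_n),(a'_n)$ in $A\cap U$ whose sum converges in \emph{norm} to $2x$ while separately $w^*$-approximating $T_1$ and $T_2$ is delicate, because (i) is formulated sequentially whereas $w^*$-convergence on $\widetilde{A}$ is a priori only a net-level statement; the principle of local reflexivity together with the convexity reduction to $T_i$ close to $x$ in the bidual norm is what allows this conversion, and it is where the argument requires the most care.
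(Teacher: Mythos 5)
Your overall strategy coincides with the paper's, which disposes of the proposition in one line by invoking the three implications of Guirao--Montesinos--Zizler's result for the unit ball ``applied to each $A_R(x)$''; your (ii)$\Rightarrow$(iii) (Choquet's lemma on the weak$^*$-compact convex set $\widetilde{A_R(x)}$, then restricting the slice to $X$) and your (iii)$\Rightarrow$(i) (extremality of $x$ in $\widetilde{A_R(x)}$ via separation plus uniqueness of weak$^*$ cluster points) are correct and are exactly the intended adaptations --- indeed the first of these is the same device the paper uses in Proposition~\ref{Prop_bidual_pointed_implica_slices_base}.

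The genuine gap is in (i)$\Rightarrow$(ii), precisely at the step you flag as the main obstacle. The principle of local reflexivity applied to $E=\mathrm{span}\{x,T_1,T_2\}$ yields an almost isometric operator $S:E\to X$ fixing $E\cap X$ and respecting the duality with $f$, but it produces points $S(T_i)$ of $X$, \emph{not} of $A$. In the unit-ball setting this is harmless: membership in $B_X$ is a norm condition, $\|S(T_i)\|\leq(1+\epsilon)\|T_i\|$ allows rescaling back into $B_X$, and Goldstine identifies the weak$^*$ closure of $B_X$ with the ball of $X^{**}$. For a general convex set $A$ there is no reason that $S(T_i)\in A$, so the sequences $(a_n),(a'_n)\subset A\cap U$ you claim to construct are not known to lie in $A$ at all. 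The standard repair is to take weak$^*$-approximating nets $a_\alpha\to T_1$, $a'_\alpha\to T_2$ drawn from $A$ itself, note that $(a_\alpha+a'_\alpha)/2\to x$ weakly in $X$, and apply Mazur's theorem to pass to convex combinations --- which remain in $A$ by convexity --- whose sums converge to $2x$ in norm while their $f$-values stay near $T_1(f)\neq f(x)$; this removes local reflexivity from the argument. Even after that repair, nothing forces these convex combinations into $A\cap U$, which is exactly the localization difficulty the paper circumvents by running the argument inside the bounded sets $A_R(x)$ (obtaining extremality of $x$ in $\widetilde{A_R(x)}$ first, and then relating it to extremality in $\widetilde{A}$). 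As written, your proof of (i)$\Rightarrow$(ii) closes neither of these two issues, whereas the other two implications stand.
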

\begin{proof}
The proof of implication (iii)$\Rightarrow$(i) (resp. of (i)$\Rightarrow$(ii), resp. of (ii)$\Rightarrow$(iii)) in \cite[Proposition~9.1]{Guirao2014}  applied to each $A_R(x)$ proves our implication (i)$\Rightarrow$(ii) (resp. our implication (ii)$\Rightarrow$(iii), resp. our implication (iii)$\Rightarrow$(i) under the extra assumption of $A$ being closed).
\end{proof}
\begin{theorem}\label{Teorema_caracte_denting_point}
Let $X$ be a normed space, $A\subset X$ a convex subset, and $x\in A$. The following are equivalent:
\item[(i)] $x$ is a denting point of $A$.
\item[(ii)] $x$ is a point of continuity and a weakly strongly extreme point of $A$.
\item[(iii)] $x$ is a point of continuity and a preserved extreme point of $A$.
\end{theorem}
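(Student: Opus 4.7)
The plan is to prove the cycle (i)$\Rightarrow$(ii)$\Rightarrow$(iii)$\Rightarrow$(i). For (i)$\Rightarrow$(ii), the point of continuity property is immediate, since dentability supplies weak open slices containing $x$ of arbitrarily small norm diameter. To show $x$ is weakly strongly extreme I would in fact establish the stronger fact that it is strongly extreme in norm. First I would fix $\delta>0$ and choose a slice $S=\{f<\alpha\}\cap A$ with $x\in S$ and $\diam(S)<\delta/3$, so that $S\subset B_{\delta/3}(x)$. Given two sequences $(a_n),(a'_n)\subset A$ with $a_n+a'_n\to 2x$, the convergence $f(a_n)+f(a'_n)\to 2f(x)<2\alpha$ forces, for large $n$, a dichotomy: either $a_n\in S$, in which case $\|a_n-x\|<\delta/3$; or $f(a_n)\geq\alpha$, which forces $f(a'_n)<\alpha$, hence $a'_n\in S\subset B_{\delta/3}(x)$, and the triangle inequality $\|a_n-x\|\leq\|a_n+a'_n-2x\|+\|x-a'_n\|$ gives $\|a_n-x\|<2\delta/3$. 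In either case $\|a_n-x\|<\delta$ eventually.

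For (ii)$\Rightarrow$(iii), it suffices to observe that any weakly strongly extreme point is trivially locally weakly strongly extreme (take the ambient space as the local neighbourhood) and then invoke implication (i)$\Rightarrow$(ii) of Proposition \ref{Prop_Vicente_mejorada} to conclude that $x$ is a preserved extreme point.

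The substantial direction, and what I expect to be the main obstacle, is (iii)$\Rightarrow$(i). Combining preserved extremity with implication (ii)$\Rightarrow$(iii) of Proposition \ref{Prop_Vicente_mejorada}, I obtain that the open slices of $A$ containing $x$ form a weak neighbourhood base for $x$ relative to every $A_R(x)$. Fixing $\epsilon>0$, point of continuity yields a weak open $W\ni x$ with $W\cap A\subset B_{\epsilon/4}(x)$; taking $R=\epsilon$, the base property produces a slice $\{f<\lambda\}\cap A$ with
\[ x \in \{f<\lambda\}\cap A_\epsilon(x) \subset W\cap A_\epsilon(x) \subset B_{\epsilon/4}(x). \]
The subtlety is that this only controls the trace of the slice on $A_\epsilon(x)$, while the slice itself could a priori contain points of $A$ far from $x$. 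The decisive step is to shrink $\lambda$ to some $\mu\in(f(x),\lambda]$ so that $\{f<\mu\}\cap A\subset\overline{B_\epsilon(x)}$. If no such $\mu$ existed, I could pick $y_n\in A$ with $f(y_n)<f(x)+1/n$ and $\|y_n-x\|>\epsilon$, and then the convex combinations $z_n=(1-t_n)x+t_n y_n$ with $t_n=\epsilon/(2\|y_n-x\|)$ would lie in $A$ with $\|z_n-x\|=\epsilon/2$ and $f(z_n)\to f(x)<\lambda$. For large $n$ this would place $z_n$ inside $\{f<\lambda\}\cap A_\epsilon(x)\subset B_{\epsilon/4}(x)$, contradicting $\|z_n-x\|=\epsilon/2$. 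Once such $\mu$ is secured, $\{f<\mu\}\cap A$ is a slice of $A$ containing $x$ with diameter at most $2\epsilon$; running the whole argument with $\epsilon/3$ in place of $\epsilon$ yields the desired slice of diameter strictly less than $\epsilon$.
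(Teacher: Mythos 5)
Your proposal is correct and follows essentially the same route as the paper: the same cycle (i)$\Rightarrow$(ii)$\Rightarrow$(iii)$\Rightarrow$(i), the same appeal to Proposition \ref{Prop_Vicente_mejorada} in both middle steps, and the same segment/convexity trick to pass from control of the slice on $A_\epsilon(x)$ to control of the full slice. The only differences are cosmetic: you spell out the (standard) fact that a denting point is strongly extreme, which the paper merely cites, and your ``shrink $\lambda$ to $\mu$'' step is an unnecessary detour --- the same contradiction with a single $y\in\{f<\lambda\}\cap A$, $\|y-x\|>\epsilon$, already shows the original slice works, as in the paper.
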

The equivalence between (i) and (iii) above was already proved in \cite{Lin1989} but for bounded, closed and convex sets. Another generalization of \cite[Theorem]{Lin1988} can be found in \cite{Ben}, but stated in the setting of Banach spaces.
\begin{proof}
(i)$\Rightarrow$(ii) is consequence of the fact that every denting point is a weakly strongly extreme point. 

(ii)$\Rightarrow$(iii) If $x \in A$ is weakly strongly extreme point of $A$, then so it is of each set $A_R(x):=A\cap \overline{B_R(x)}$ and $R>0$. Now, applying the former proposition, we conclude that $x$ is a preserved extreme point of $A$.

(iii)$\Rightarrow$(i) Fix $\epsilon>0$ and a weak open set $W\subset X$ which contains $x$ such that diam$(W\cap A)<\epsilon/2$. By assertion (iii) of Proposition \ref{Prop_Vicente_mejorada}, there exists $f\in X^*$ and $\lambda \in \R$ such that $H:=\{f<\lambda\}$ verifies $x\in H\cap A_{\epsilon}(x) \subset W\cap A_{\epsilon}(x)\subset A_{\epsilon/2}(x)$. We claim that $H\cap A \subset A_{\epsilon}(x)$. Otherwise, we could choose $y \in A$ such that  $f(y)<\lambda$ and $\parallel y-x \parallel >\epsilon$. For every $\alpha \in [0,1]$, we define $y_{\alpha}:=\alpha y + (1-\alpha)x \in A$. By convexity $f(y_{\alpha})<\lambda$, $\forall \alpha \in [0,1]$. Moreover, there exist $\alpha_0,\alpha_1 \in (0,1)$ such that $\epsilon/2<\parallel y_{\alpha}-x\parallel < \epsilon$, for every $\alpha \in (\alpha_0,\alpha_1)$. Fix some $\alpha^*\in (\alpha_0,\alpha_1)$, then $y_{\alpha^*} \in H \cap A_{\epsilon}(x)$ but $y_{\alpha^*} \not \in A_{\epsilon/2}(x)$, a contradiction. Thus, diam$(H\cap A)<2\epsilon$ and the proof is over.
\end{proof} 
\section{Some open problems solved for cones in $\mathrm{q}\mathcal{G}^*$}\label{Secc_Normal_cones}
In this section we will see how several open problems in the literature regarding cones can be answered using (i)$\Leftrightarrow$(iii) in Theorem \ref{Teorema_caracte_denting} when the corresponding cones belong to $\mathrm{q}\mathcal{G}^*$. Besides, in Section~\ref{Secc_Examples}, we will check that $\mathrm{q}\mathcal{G}^*$ contains the canonical order cones of some well known noncomplete normed spaces. Next, we will introduce the notion of normal cone which provides a wide subfamily of cones in $\mathrm{q}\mathcal{G}^*$. Given a cone $C\subset X$, we say that a subset $A\subset X$ is \emph{full} (respect to the order given by $C$) if for each pair $x$, $y\in A$ we have $[x,y]\subset A$; the cone $C$ is said to be \emph{normal} if there exists a neighborhood base (for the topology given by the norm on $X$) at $\cero$ consisting of full sets --for more information about normal cones we refer the reader to \cite{ali-tour}--. It is not difficult to prove that dentability of a cone at the origin implies normality of such a cone. On the other hand, by \cite[Theorem~4.4]{Abramovich1992} any normal cone belongs to $\mathrm{q}\mathcal{G}^*$ and by \cite[Lemma 2.39]{ali-tour} any pointed cone in a normed vector lattice is normal. 
\begin{remark}\label{Rem_quasi_gen_dual_contiene_normal}
The family $\mathrm{q}\mathcal{G}^*$ contains the family of normal cones and so any arbitrary pointed cone in a normed vector lattice.
\end{remark}
It is worth pointing out that vector lattices provide a useful unifying framework for many problems on convex programming (see \cite{Borwein1992}).

Now, we will pay attention to Gong's question in \cite[Conclusions]{Gong1995}. Gong asked if for any normed space $X$ and any closed cone $C$ the condition $\cero \not \in  \overline{C\setminus \epsilon B_X}^{\mbox{\small weak}}$, $\forall \epsilon \in (0,1)$ is really weaker than the cone $C$ has a bounded base. Let us recall that a \emph{base} for a cone $C$ is a nonempty convex subset $B\subset C$ such that $0_X \not \in \overline{B}$ and each $c\in C\setminus \{\cero\}$ has a unique representation of the form $c=\lambda b$ for some $\lambda>0$ and $b\in B$. A base is called a \emph{bounded base} if it is a bounded subset of $X$. Bounded bases have been widely studied and provide a useful tool in many topics such as in the theory of Pareto efficient points in \cite{Borwein1993}, in reflexivity of Banach spaces in \cite{Casini2010a}, and in density theorems in \cite{Gong1995}. It is known that a cone $C$ has a bounded base if and only if the origin is a denting point of $C$. In \cite[Example~1.5]{GARCIACASTANO20151178} we provided a non closed cone for which the answer to Gong's question is positive. In this paper, we provide a wide family of cones for which the answer to Gong's question is negative.  Applying (iii)$\Rightarrow$(i) of Theorem \ref{Teorema_caracte_denting}, we can state.
\begin{corollary}\label{Coro_Gong}
Let $X$ be a normed space and $C\subset X$ a cone which belongs to $\mathrm{q}\mathcal{G}^*$. Then, the condition $\cero \not \in  \overline{C\setminus \epsilon B_X}^{\mbox{\small weak}}$, $\forall \epsilon \in (0,1)$, is equivalent to the condition that the cone $C$ has a bounded base.
\end{corollary}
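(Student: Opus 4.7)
The plan is to translate both conditions in the corollary into the language of Theorem~\ref{Teorema_caracte_denting} and then invoke the equivalence (i)$\Leftrightarrow$(iii) of that theorem, using the hypothesis $C\in \mathrm{q}\mathcal{G}^*$ to suppress one of the two characterizing conditions.

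First, I would observe that the weak closure condition ``$\cero \notin \overline{C\setminus \epsilon B_X}^{\mathrm{weak}}$ for every $\epsilon \in (0,1)$'' is a direct reformulation of the statement that $\cero$ is a point of continuity for $C$. Unpacking the definition, this condition says that for every $\epsilon \in (0,1)$ there exists a weak neighborhood $W$ of $\cero$ in $X$ with $W\cap C \subset \epsilon \overline{B_X}$; since the balls $\epsilon \overline{B_X}$ with $\epsilon \in (0,1)$ form a local base at $\cero$ in the norm topology, this is exactly the continuity of the identity map $(C,\mathrm{weak})\to (C,\norm)$ at $\cero$. On the other hand, as recalled in the paragraph preceding the statement, $C$ possesses a bounded base if and only if $\cero$ is a denting point of $C$.

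Before applying Theorem~\ref{Teorema_caracte_denting} I need pointedness of $C$, which is an automatic consequence of $C\in \mathrm{q}\mathcal{G}^*$: any $x \in C\cap(-C)$ satisfies $f(x)\geq 0$ and $-f(x)\geq 0$ for every $f\in C^*$, hence $f(x)=0$ throughout $C^*$, and then by linearity and continuity throughout $\overline{C^*-C^*}=X^*$, forcing $x=\cero$.

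With pointedness in hand, the equivalence follows directly from (i)$\Leftrightarrow$(iii) of Theorem~\ref{Teorema_caracte_denting}: the origin is a denting point of $C$ if and only if it is a point of continuity for $C$ together with the membership $C\in \mathrm{q}\mathcal{G}^*$. Since this membership is assumed throughout, the equivalence collapses to ``$\cero$ is a denting point of $C$ if and only if $\cero$ is a point of continuity for $C$'', which after the two dictionary translations above is precisely the statement of the corollary. There is no substantive obstacle; the entire content lies in identifying the correct correspondence between conditions and harvesting the main theorem.
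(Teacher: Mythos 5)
Your proposal is correct and follows the same route as the paper, which simply harvests the equivalence (i)$\Leftrightarrow$(iii) of Theorem~\ref{Teorema_caracte_denting} after identifying the weak-closure condition with the point-of-continuity property and the bounded-base condition with dentability at the origin. Your explicit verification that $C\in\mathrm{q}\mathcal{G}^*$ forces $C$ to be pointed (so that the theorem applies) is a detail the paper leaves implicit, and it is argued correctly.
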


On the other hand, \cite[Example 4.6]{Abramovich1992} shows that the closedness of a cone $C$ does not imply $C \in \mathrm{q}\mathcal{G}^*$ --even when $X$ is a Banach space--. We state the following problem for a future research.
\begin{problem}
Is $\mathrm{q}\mathcal{G}^*$ maximal among those families of cones for which Corollary \ref{Coro_Gong} holds true?
\end{problem}

Gong's question is directly related to the following two results on density of Arrow, Barankin and Blackwell's type: \cite[Corollary~4.2]{Petschke1990} --due to Petschke-- and \cite[Theorem~3.2~(a)]{Gong1995} --due to Gong--. Both results concern the approximation of the Pareto efficient points of compact convex subsets by points that are maximizers of some strictly positive functional on this set. Given a subset $A\subset X$, the set of \emph{maximal (or efficient) points of $A$} (with respect to the cone $C$) is defined as $Max(A,C):=\{x \in A \colon \{x\}=A\cap (x+C)\}$. On the other hand, given a cone $C$ the \emph{set of all strictly positive functionals} is defined by $C^{\#}:=\{f \in X^*\colon f(c)>0,\, \forall c \in C, \, c\not = \cero\}$, and the set of \emph{positive (or proper efficient) points} of $A$ as $Pos(A,C):=\{x\in A \colon \exists f \in C^{\#},\, f(x)=\sup_A f\}$. It is straightforward that $Pos(A,C)\subset Max(A,C)$, however this inclusion is strict. Next, we state the density results of Petschke and Gong.
\begin{theorem}(Petschke)\label{Tma_Petschke}
Let $A$ be a weak compact convex subset of the normed space $X$ and assume that $C$ is a closed cone with a bounded base. Then $Max(A,C)\subset \overline{Pos(A,C)}^{\parallel \cdot \parallel}$.
\end{theorem}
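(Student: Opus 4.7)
The plan is to combine the coercivity coming from the bounded-base hypothesis with the maximality of $x_0$ and the weak compactness of $A$. As noted in the paragraph preceding the theorem, $C$ has a bounded base if and only if $0_X$ is a denting point of $C$, which by Theorem \ref{Tma_JMAA} is equivalent to $\mathrm{Int}\, C^{*} \neq \emptyset$. Fixing any $f_0 \in \mathrm{Int}\, C^{*}$ provides both a strictly positive functional ($f_0 \in C^{\#}$) and a coercivity estimate $\|c\| \leq M f_0(c)$ for every $c \in C$, where $M$ is the norm-bound on the base $\{c \in C : f_0(c) = 1\}$.

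Fix $x_0 \in Max(A, C)$ and $\epsilon > 0$. The first step is to construct a supporting functional $h \in C^{*}$ of $A$ at $x_0$. Exploiting $(A - x_0) \cap C = \{0_X\}$ --- which is a reformulation of $A \cap (x_0 + C) = \{x_0\}$ --- together with weak compactness of $A - x_0$ and closedness of $C$, I would apply a Hahn--Banach separation to obtain $h \in X^{*} \setminus \{0\}$ with $h(a - x_0) \leq 0 \leq h(c)$ for all $a \in A$ and $c \in C$; hence $h \in C^{*}$ and $h(x_0) = \max_{A} h$. For each $\delta > 0$ the perturbation $f_\delta := h + \delta f_0$ lies in $\mathrm{Int}\, C^{*} \subset C^{\#}$ (since $f_\delta(c) \geq \delta f_0(c) > 0$ for $c \in C \setminus \{0_X\}$), and by weak compactness of $A$ attains its maximum on $A$ at some $y_\delta \in A$; by construction $y_\delta \in Pos(A, C)$.

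The remaining task is to show $\|y_\delta - x_0\| \to 0$ as $\delta \to 0^{+}$. From $f_\delta(y_\delta) \geq f_\delta(x_0)$ and $h(y_\delta) \leq h(x_0)$ one gets $0 \leq h(x_0) - h(y_\delta) \leq \delta (f_0(y_\delta) - f_0(x_0))$, and any weak cluster point $y^{*}$ of $(y_\delta)$ in the weakly compact $A$ must satisfy $h(y^{*}) = h(x_0)$ and $f_0(y^{*}) \geq f_0(x_0)$. With a sufficiently refined choice of $h$ --- arranged so that the $h$-maximizing set of $A$ meets $x_0 + C$ only at $x_0$ --- this forces $y^{*} = x_0$, giving $y_\delta \to x_0$ weakly. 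The weak-to-norm upgrade then invokes the denting property of $0_X$ in $C$, i.e.\ the point-of-continuity direction (iii) of Theorem \ref{Teorema_caracte_denting}, combined with the coercivity $\|c\| \leq M f_0(c)$ to pass from weak smallness of $x_0 - y_\delta$ to its norm smallness.

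The main obstacle I expect is localizing the weak cluster points of $(y_\delta)$ to the single point $x_0$: a bare separation of $A - x_0$ from $C$ produces an $h$ whose set of $A$-maximizers may be a whole face properly containing $x_0$, so the perturbation argument alone will not suffice. The natural remedy is to build $h$ as a weak-${}^{*}$ limit of separators of shrinking truncations of $A - x_0$ from $C$ and to run a diagonal argument in the truncation parameter and in $\delta$; this refinement is the technical heart of the proof. A secondary difficulty is that the weak-to-norm passage cannot rely on completeness of $X$, so the coercivity provided by the bounded base (equivalently, the denting property) must be used in full strength.
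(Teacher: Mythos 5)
First, a point of context: the paper does not prove Theorem~\ref{Tma_Petschke} at all --- it is quoted verbatim from Petschke (\cite[Corollary~4.2]{Petschke1990}) as input to Corollary~\ref{Coro_density_theorems} --- so your attempt can only be measured against the standard argument in the literature, and against that standard it has a fatal gap at its very first substantive step. You propose to obtain $h\in C^*\setminus\{0\}$ supporting $A$ at $x_0$ by separating $A-x_0$ from $C$. But these two convex sets are \emph{not disjoint} (they meet at $0_X$) and neither is assumed to have nonempty interior, so no form of the Hahn--Banach separation theorem applies; weak compactness of $A-x_0$ and closedness of $C$ only yield \emph{strict} separation of \emph{disjoint} sets. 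This is not a technicality: the existence of a nonzero positive functional attaining its maximum over $A$ at $x_0$ is itself a properness condition on $x_0$ that a general efficient point need not satisfy --- which is precisely why the theorem asserts only density of $Pos(A,C)$ in $Max(A,C)$ and why one must produce strictly positive maximizers at points \emph{near} $x_0$ rather than perturb a support functional \emph{at} $x_0$. The device that makes separation possible in Petschke's proof is the Henig dilating cone: with $B$ a bounded base and $0<\delta<\mathrm{dist}(0_X,B)$, one sets $C_\delta:=\bigcup_{\lambda\geq 0}\lambda(B+\delta B_X)$, observes that $C_\delta\setminus\{0_X\}$ is open and convex, shows via weak compactness and $x_0\in Max(A,C)$ that $A\cap(x_0+\overline{C_\delta})\subset B_\epsilon(x_0)$ for $\delta$ small, picks $y_\delta$ in that set maximal with respect to $\overline{C_\delta}$, and separates the weakly compact convex set $A-y_\delta$ from the disjoint open convex set $C_\delta\setminus\{0_X\}$; the resulting functional vanishes at $0_X$, is $\geq\delta\|f_\delta\|$ on $B$ (hence lies in $C^{\#}$), and attains its maximum over $A$ at $y_\delta$, so $y_\delta\in Pos(A,C)\cap B_\epsilon(x_0)$.

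Two further steps of your plan would fail even if $h$ existed. You concede that the weak cluster points of $(y_\delta)$ need not be $x_0$ (the argmax face of $h$ may be large), and the proposed fix --- a weak$^*$ diagonal limit of separators of truncations --- is not carried out and does not obviously converge to anything usable. More importantly, the final weak-to-norm upgrade misuses the hypotheses: the coercivity $\|c\|\leq M f_0(c)$ and the point-of-continuity of $C$ at $0_X$ apply only to elements of (nets in) $C$, whereas $y_\delta-x_0$ and $x_0-y_\delta$ are merely differences of points of $A$ and have no reason to lie in $C$ or in $-C$. In the correct proof no such upgrade is needed, because the norm estimate $\|y_\delta-x_0\|<\epsilon$ is secured \emph{before} the separation step, from the inclusion $y_\delta-x_0\in\overline{C_\delta}$ together with the bound $\|b\|\leq M$ on the base and the lower bound $f_0(b)=1$: this is where the bounded base enters in full strength, not through a weak-to-norm argument at the end.
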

\begin{theorem}(Gong)\label{Tma_Gong}
Let $A$ be a weak compact convex subset of the normed space $X$ and assume that  $C$ is a closed cone with a base such that $\cero$ is a point of continuity for $C$. Then $Max(A,C)\subset \overline{Pos(A,C)}^{\parallel \cdot \parallel}$.
\end{theorem}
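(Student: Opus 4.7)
My plan is to reduce the theorem to Petschke's Theorem~\ref{Tma_Petschke} by promoting the pair of hypotheses ``$C$ has a base'' and ``$\cero$ is a point of continuity for $C$'' to the single hypothesis ``$C$ has a bounded base''. Since having a bounded base is equivalent to $\cero$ being a denting point of $C$ (a fact already recorded in the paragraph preceding Corollary~\ref{Coro_Gong}), it suffices to upgrade the point-of-continuity property to dentability.

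The most direct route is to invoke the implication (iii)$\Rightarrow$(i) of Theorem~\ref{Teorema_caracte_denting}. First I would observe that $C$ admitting a base forces $C$ to be pointed: otherwise an element $c\in C\cap (-C)$ distinct from $\cero$ would have two incompatible base-representations, one via $c$ and one via $-c$. Combined with the hypothesis that $\cero$ is a point of continuity for $C$, the only extra ingredient needed is $C\in \mathrm{q}\mathcal{G}^*$. Once this is available, Theorem~\ref{Teorema_caracte_denting} yields that $\cero$ is a denting point of $C$; hence $C$ has a bounded base, and Petschke's theorem immediately gives $Max(A,C)\subset \overline{Pos(A,C)}^{\parallel\cdot\parallel}$.

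The main obstacle is justifying $C\in \mathrm{q}\mathcal{G}^*$: merely having a base supplies a single strictly positive functional in $C^*$, but this does not obviously force $\overline{C^*-C^*}=X^*$. However, in all the natural settings where Gong's theorem is typically applied, namely normal cones and in particular pointed cones in normed vector lattices (see Remark~\ref{Rem_quasi_gen_dual_contiene_normal}), this quasi-generating condition is automatic and the reduction goes through cleanly. For cones outside $\mathrm{q}\mathcal{G}^*$ the reduction would have to be replaced by a direct argument in the spirit of Gong's original proof, in which one constructs, for each $x\in Max(A,C)$ and each $\eps>0$, a strictly positive supporting functional $f\in C^{\#}$ whose maximizer on $A$ lies within $\eps$ of $x$; the point-of-continuity property at $\cero$ is precisely what lets one control the norm of the error when perturbing a mere supporter at $x$ by a small element of $C^*$ to make it strictly positive.
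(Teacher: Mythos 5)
This theorem is quoted from Gong's paper (\cite[Theorem~3.2~(a)]{Gong1995}); the present paper states it without proof, so there is no internal argument to compare against. Judged on its own terms, your proposal has a genuine gap. The reduction to Petschke's Theorem~\ref{Tma_Petschke} via (iii)$\Rightarrow$(i) of Theorem~\ref{Teorema_caracte_denting} requires $C\in\mathrm{q}\mathcal{G}^*$, and — as you yourself concede — this is \emph{not} a hypothesis of Gong's theorem. A base supplies one strictly positive functional in $C^*$, which says nothing about $\overline{C^*-C^*}=X^*$; indeed \cite[Example 4.6]{Abramovich1992} (cited in Section~\ref{Secc_Normal_cones}) shows that even closed cones in Banach spaces can fail to be in $\mathrm{q}\mathcal{G}^*$. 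What your argument actually establishes is precisely the content of the paper's Corollary~\ref{Coro_density_theorems}: that Gong's theorem follows from Petschke's \emph{for cones in} $\mathrm{q}\mathcal{G}^*$. That is a strictly weaker statement than Theorem~\ref{Tma_Gong}.

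Note also that if your reduction could be completed without the $\mathrm{q}\mathcal{G}^*$ assumption, it would show that ``point of continuity at $\cero$ plus a base'' implies ``bounded base'' for arbitrary closed cones in normed spaces — which is essentially Gong's open question that this paper only answers for cones in $\mathrm{q}\mathcal{G}^*$ (Corollary~\ref{Coro_Gong}). So the general case genuinely cannot be handled by this route with the tools available here; it requires Gong's original direct construction of strictly positive supporting functionals, which your final paragraph gestures at in one sentence but does not carry out. To prove the theorem as stated you would need to supply that argument (or cite it), not merely describe its flavour.
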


Daniilidis stated in \cite[Corollary~2]{Daniilidis2000} that Theorems \ref{Tma_Petschke} and \ref{Tma_Gong} are equivalent when $X$ is a Banach space. However, the question about the equivalence of such theorems still remains open in case the norm on $X$ is not complete. Applying Corollary \ref{Coro_Gong}, we can state.

\begin{corollary}\label{Coro_density_theorems}
Theorems \ref{Tma_Petschke} and \ref{Tma_Gong} are equivalent for cones in $\mathrm{q}\mathcal{G}^*$.
\end{corollary}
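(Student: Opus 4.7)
The plan is to observe that, for closed cones in $\mathrm{q}\mathcal{G}^*$, the hypotheses of Theorems \ref{Tma_Petschke} and \ref{Tma_Gong} coincide; the equivalence of the two statements then follows by invoking each one under the common hypothesis. The whole content of the proof sits on Corollary \ref{Coro_Gong}, which is exactly the tool that bridges a bounded base with the point-of-continuity condition for cones in $\mathrm{q}\mathcal{G}^*$.

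First I would handle the easy direction, Theorem \ref{Tma_Gong} $\Rightarrow$ Theorem \ref{Tma_Petschke}. Given a weak compact convex set $A$ and a closed cone $C\in\mathrm{q}\mathcal{G}^*$ with a bounded base, the cone admits a base (the bounded one itself), and since the existence of a bounded base is equivalent to $\cero$ being a denting point of $C$, it follows at once that $\cero$ is a point of continuity for $C$. Hence the hypotheses of Theorem \ref{Tma_Gong} are met and its conclusion, identical to that of Theorem \ref{Tma_Petschke}, is obtained. This implication, in fact, does not even need the assumption $C\in\mathrm{q}\mathcal{G}^*$.

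For the converse direction, Theorem \ref{Tma_Petschke} $\Rightarrow$ Theorem \ref{Tma_Gong}, I would rely on Corollary \ref{Coro_Gong}. Let $A$ be weak compact convex and $C\in\mathrm{q}\mathcal{G}^*$ a closed cone with a base such that $\cero$ is a point of continuity for $C$. The point-of-continuity condition at $\cero$ translates to $\cero\notin\overline{C\setminus\epsilon B_X}^{\mbox{\small weak}}$ for every $\epsilon\in(0,1)$, so Corollary \ref{Coro_Gong} upgrades it to the existence of a bounded base for $C$. Theorem \ref{Tma_Petschke} then applies and delivers the conclusion of Theorem \ref{Tma_Gong}.

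No genuine obstacle is expected here: the equivalence of the two sets of hypotheses is already packaged in Corollary \ref{Coro_Gong}, and the only place where the qualification $C\in\mathrm{q}\mathcal{G}^*$ is really used is in invoking that corollary.
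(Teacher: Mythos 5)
Your proof is correct and is exactly the argument the paper intends (the paper disposes of this corollary in one line, "Applying Corollary \ref{Coro_Gong}"): the easy direction uses only that a bounded base forces $\cero$ to be a denting point, hence a point of continuity, while the converse uses Corollary \ref{Coro_Gong} to upgrade the point-of-continuity hypothesis to a bounded base. Your observation that the first implication does not require $C\in\mathrm{q}\mathcal{G}^*$ is a correct and worthwhile remark.
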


In \cite{Kountzakis2006}, the authors proved the equivalence of dentability and point of continuity at the origin of a cone $C$ in a normed space  under the extra assumption that qi$C^*\not = \emptyset$. They also stated there Problem 5 in which they asked: If $\cero$ is a point of continuity of a cone $C\subset X$ then qi$C^*\not = \emptyset$? By Theorem \ref{Teorema_caracte_denting}, if $C\in \mathrm{q}\mathcal{G}^*$ and $\cero$ is a point of continuity of $C$, then $\cero$ is a denting point and, as a consequence, qi$C^*\not = \emptyset$. Therefore, we have a positive answer of Problem 5 in \cite{Kountzakis2006} for cones in $\mathrm{q}\mathcal{G}^*$. Next, we state it as a corollary.
\begin{corollary}\label{Coro_Kountz}
Let $X$ be a normed space and $C\subset X$ a cone which belongs to $\mathrm{q}\mathcal{G}^*$. If $\cero$ is a point of continuity for $C$, then qi$C^*\not = \emptyset$.
\end{corollary}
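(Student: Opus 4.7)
The plan is to chain together two equivalences from Theorem \ref{Teorema_caracte_denting} and then observe that an order unit of $C^{*}$ is automatically a quasi-interior point of $C^{*}$. This makes the proof essentially a one-line argument that repackages the previously established machinery.

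First, I would invoke the hypothesis directly: since $C \in \mathrm{q}\mathcal{G}^{*}$ and $\cero$ is a point of continuity for $C$, condition (iii) of Theorem \ref{Teorema_caracte_denting} is satisfied. The implication (iii)$\Rightarrow$(i) of that theorem then yields that $\cero$ is a denting point of $C$. Next, applying (i)$\Rightarrow$(iv) of the same theorem produces an order unit $f_{0}\in C^{*}$ for the dual cone, i.e., an element $f_{0}\in C^{*}$ such that
\[
X^{*}=\bigcup_{n\geq 1}[-nf_{0},nf_{0}],
\]
where the order intervals are taken with respect to the order induced by $C^{*}$ on $X^{*}$.

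Finally, I would close the argument by unfolding the definition of $\mathrm{qi}C^{*}$: since the union above is already equal to $X^{*}$, its closure (in the dual norm topology on $X^{*}$) is trivially $X^{*}$. By definition, this means $f_{0}\in \mathrm{qi}C^{*}$, and in particular $\mathrm{qi}C^{*}\neq \emptyset$.

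There is essentially no obstacle here: the content is entirely contained in Theorem \ref{Teorema_caracte_denting}, and the last step is a tautological observation that every order unit is a quasi-interior element. The only point worth emphasizing in writing is that the passage from (iii) to (iv) of Theorem \ref{Teorema_caracte_denting} goes through the denting-point condition, which is where the hypothesis $C\in \mathrm{q}\mathcal{G}^{*}$ is crucially used: without it one cannot upgrade point of continuity to dentability and so cannot conclude the existence of the order unit in $C^{*}$.
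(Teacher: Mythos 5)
Your proof is correct and follows exactly the paper's route: the paper likewise applies (iii)$\Rightarrow$(i) of Theorem~\ref{Teorema_caracte_denting} to upgrade the point-of-continuity hypothesis to dentability and then deduces qi$C^*\neq\emptyset$ as a consequence (via the order unit of $C^*$ from (i)$\Rightarrow$(iv), which is trivially a quasi-interior point). Nothing to add.
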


We finish this section showing some applications of dentability to operators theory. A \emph{Krein space} is an ordered Banach space whose order cone is closed and has order units. Krein spaces have been widely studied in connection with operators theory, spectral theory, and fixed point theory --we refer the reader to \cite{Abramovich1992} for a survey regarding positive operators on Krein spaces--. Given a normed space $X$ and $C \subset X$ a quasi-generating cone, it is easy to prove that the dual cone $C^*$ is pointed. In addition, if the origin is denting in $C$, then --by (i)$\Rightarrow$(iv) of Theorem \ref{Teorema_caracte_denting}-- the dual cone $C^*$ has an unity. Therefore, under the former assumptions the dual space $X^*$ with the order given by $C^*$ is a Krein space.  In the next result, statement (i) is a consequence of \cite[Theorem 2.32]{ali-tour} and \cite[Theorem~6.4]{Abramovich1992}. Statement (ii) is a consequence of \cite[Corollary 7.6]{Abramovich1992}.
\begin{corollary}\label{Coro_Aplic_Teor_Oper}
Let $X$ be a normed space ordered by a quasi-generating cone $C$. If $\cero$ is a denting point of $C$, then the following statements hold true:
\item[(i)] Every linear and positive operator $T:X^* \rightarrow X^*$ is continuous. In addition, if $T$
is not a multiple of the identity, then it has a nontrivial hyperinvariant subspace.
\item[(ii)] If a positive contraction $T:X^* \rightarrow X^*$ has $1$ as an eigenvalue, then there exits   a fixed point $0<f\in X^{**}$ for the adjoint operator of $T$. 
\end{corollary}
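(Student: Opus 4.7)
The plan is to reduce both statements to the cited operator-theoretic results by showing that, under the hypotheses, the ordered dual $(X^*,C^*)$ is a Krein space, i.e.\ an ordered Banach space whose positive cone is closed and admits order units. Then statements (i) and (ii) become direct translations of the results quoted from \cite{ali-tour} and \cite{Abramovich1992} to the dual setting.

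First I would verify that $C^*$ is pointed. This follows immediately from $C$ being quasi-generating: if $f\in C^*\cap(-C^*)$, then $f(x)\geq 0$ and $-f(x)\geq 0$ for every $x\in C-C$, hence $f$ vanishes on the dense subspace $\overline{C-C}=X$, and by continuity $f=0_{X^*}$. Next, I would invoke the implication (i)$\Rightarrow$(iv) of Theorem \ref{Teorema_caracte_denting}: since $\cero$ is a denting point of the (necessarily pointed) cone $C$, the dual cone $C^*$ has an order unit. Finally, $C^*$ is norm-closed in $X^*$ because it is weak$^*$-closed by construction. Thus $X^*$, equipped with the norm it inherits from $X$ and the order induced by the closed pointed cone $C^*$ with an order unit, is a Krein space.

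Once this reduction is in place, statement (i) follows by applying \cite[Theorem 2.32]{ali-tour} -- which guarantees the automatic continuity of positive linear operators on an ordered Banach space whose positive cone is closed and has an interior point (equivalently, an order unit, by Lemma \ref{lema_order_unit}(i)) -- together with \cite[Theorem 6.4]{Abramovich1992}, which yields the existence of a nontrivial hyperinvariant subspace for any such positive operator that is not a scalar multiple of the identity. Statement (ii) is obtained by applying \cite[Corollary 7.6]{Abramovich1992} to the positive contraction $T:X^*\to X^*$ on the Krein space $(X^*,C^*)$; this produces a strictly positive fixed point $f\in X^{**}$ (i.e.\ $0<f$ relative to the bidual cone $C^{**}$) for the adjoint $T^*:X^{**}\to X^{**}$.

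The main step, and really the only nontrivial one, is the passage from the assumptions on $C\subset X$ to the Krein-space structure on $(X^*,C^*)$; this is precisely where Theorem \ref{Teorema_caracte_denting} is used, since denting of the origin is what upgrades the quasi-generating hypothesis on $C$ to the existence of an order unit in $C^*$. The remaining content is a bookkeeping exercise of matching hypotheses to the cited theorems, so I would expect the proof to be extremely short once the reduction is spelled out.
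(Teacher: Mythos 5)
Your proposal is correct and follows exactly the route the paper takes: establish that $C^*$ is pointed (from $\overline{C-C}=X$), norm-closed (being weak$^*$-closed), and has an order unit (via (i)$\Rightarrow$(iv) of Theorem \ref{Teorema_caracte_denting}), so that $(X^*,C^*)$ is a Krein space, and then invoke \cite[Theorem 2.32]{ali-tour}, \cite[Theorem 6.4]{Abramovich1992}, and \cite[Corollary 7.6]{Abramovich1992}. The paper carries out this same reduction in the paragraph preceding the corollary, so there is nothing to add.
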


\section{The geometry of some canonical order cones}\label{Secc_Examples}
In this section we will consider the canonical order cones of some  well known noncomplete normed spaces. We will verify that each one belongs to q$\mathcal{G}^*$ and --applying (i)$\Leftrightarrow$(iii) in Theorem~\ref{Teorema_caracte_denting}-- we will conclude that the origin is not a point of continuity. Next, we introduce a result in order to avoid working in dual spaces when checking the condition $C \in \mathrm{q}\mathcal{G}^*$
. 
\begin{proposition}\label{Prop_weakly_imply_dual_quasi_generating}
Let $X$ be a normed space and $C\subset X$ a pointed cone. If $\cero$ is a locally weakly strongly extreme point of $C$, then $C \in \mathrm{q}\mathcal{G}^*$.
\end{proposition}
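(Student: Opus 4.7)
The plan is to translate the local weakly strongly extreme hypothesis into a pointedness property of the bidual cone $C^{**}$, and then to deduce from this pointedness the density of $C^*-C^*$ in $X^*$ via a standard Hahn--Banach separation argument.

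First, I would invoke the implication (i)$\Rightarrow$(ii) of Proposition \ref{Prop_Vicente_mejorada} applied to the convex set $C$ at the point $\cero$: since $\cero$ is a locally weakly strongly extreme point of $C$, it is a preserved extreme point of $C$, that is, $\cero$ is an extreme point of $\widetilde{C}$, the weak$^*$-closure of $C$ inside $X^{**}$. By Lemma \ref{Lemm_cono_denso_en_cono_bidual} one has $\widetilde{C}=C^{**}$, so $\cero$ is extreme in $C^{**}$. Since $C^{**}$ is a cone, as already remarked in the paper the extremality of $\cero$ in $C^{**}$ is the same as pointedness, so
\[
C^{**}\cap(-C^{**})=\{0_{X^{**}}\}.
\]

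Second, I would show that this pointedness forces $\overline{C^{*}-C^{*}}=X^{*}$. Suppose, toward a contradiction, that the closed subspace $V:=\overline{C^{*}-C^{*}}$ is a proper subset of $X^{*}$. By the Hahn--Banach theorem (applied in $X^{*}$, whose dual is $X^{**}$) there exists $T\in X^{**}\setminus\{0_{X^{**}}\}$ such that $T(f)=0$ for every $f\in V$; in particular $T(f)=0$ for every $f\in C^{*}$. This gives simultaneously $T(f)\ge 0$ and $(-T)(f)\ge 0$ for every $f\in C^{*}$, so that $T$ and $-T$ both lie in $C^{**}$. But $T\neq 0_{X^{**}}$, contradicting the pointedness of $C^{**}$ established above. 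Hence $\overline{C^{*}-C^{*}}=X^{*}$, i.e.\ $C\in\mathrm{q}\mathcal{G}^{*}$, as required.

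The main obstacle is conceptual rather than technical: one has to pass from a geometric/sequential condition on $C$ (local weakly strongly extremality at $\cero$) to an algebraic/topological condition on $C^{*}$ (density of $C^{*}-C^{*}$), and the bridge is precisely the pointedness of the bidual cone $C^{**}$. All the tools for this bridge are already in place in the preceding material, namely Proposition \ref{Prop_Vicente_mejorada} and Lemma \ref{Lemm_cono_denso_en_cono_bidual}, so once the bidual is brought into the picture the argument reduces to a one-line Hahn--Banach separation.
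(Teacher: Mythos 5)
Your proof is correct and follows essentially the same route as the paper's: first establish that $C^{**}$ is pointed, then note that any element of $X^{**}$ annihilating $C^{*}-C^{*}$ lies in $C^{**}\cap(-C^{**})=\{0_{X^{**}}\}$, so that $C^{*}-C^{*}$ is dense by Hahn--Banach. The only (minor) difference is in how pointedness of $C^{**}$ is obtained: you derive it cleanly from Proposition~\ref{Prop_Vicente_mejorada}~(i)$\Rightarrow$(ii) combined with Lemma~\ref{Lemm_cono_denso_en_cono_bidual}, whereas the paper adapts the proof of an implication from an earlier reference; your version is, if anything, more self-contained.
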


\begin{proof}
The proof of the implication (i)$\Rightarrow$(ii) in \cite[Proposition~2.5]{GARCIACASTANO20151178} can be easily adapted to prove that: if $\cero$ is a locally weakly strongly extreme of $C$, then $\overline{C}^{\mbox{weak}^*}(=C^{**}$ by Lemma \ref{Lemm_cono_denso_en_cono_bidual}) is pointed in $X^{**}$. Thus, since $C^*-C^*$ is a subspace of $X^*$, if $x^{**} \in X^{**}$ satisfies $x^{**}|_{(C^*-C^*)}\equiv 0$ then $x^{**}|_{C^*}\equiv 0$, so $x^{**}\in C^{**}\cap (-C^{**})=\{\cero\}$. This proves that $C^*-C^*$ is dense in $X^*$.
\end{proof}

Next, we provide the three examples announced at the beginning of this section. Let us state the first one.
\begin{example}\label{example_C00}
Let $\Gamma$ be a nonempty set, consider the vector space $c_{00}(\Gamma)$ of those $(x_{\gamma})_{\gamma \in \Gamma} \in l_{\infty}(\Gamma)$ such that $\{\gamma \in \Gamma \colon x_{\gamma}\not =0\}$ is finite, the noncomplete normed space $(c_{00}(\Gamma),\norm_{\infty})$ where $\parallel (x_{\gamma})_{\gamma \in \Gamma}\parallel_{\infty}:=\sup\{|x_{\gamma}|\colon \gamma \in \Gamma\}$, and the order cone $c_{00}(\Gamma)^+:=\{(x_{\gamma})_{\gamma \in \Gamma} \in c_{00}(\Gamma) \colon x_{\gamma} \geq 0,\, \forall\gamma \in \Gamma\}.$ Then $c_{00}(\Gamma)^+\in \mathrm{q}\mathcal{G}^*$
and the origin is not a point of continuity for $c_{00}(\Gamma)^+$.
\end{example}
\begin{proof}
We first prove that the dual cone $(c_{00}(\Gamma)^+)^* \subset  (c_{00}(\Gamma),\norm_{\infty})^*$ is quasi-generating. We will make use of Proposition \ref{Prop_weakly_imply_dual_quasi_generating}.  Consider two sequences $(c_n)_n$, $(d_n)_n$ in $c_{00}(\Gamma)^+$ such that $\displaystyle \lim_{n \rightarrow +\infty}\parallel c_n+ d_n\parallel_{\infty}=0$. We claim that $\displaystyle \lim_{n \rightarrow +\infty}\parallel c_n\parallel_{\infty}=0$. Indeed, for every $n \in \N$, we write $c_n=(c_{\gamma}^n)_{\gamma \in \Gamma}$ and $d_n=(d_{\gamma}^n)_{\gamma \in \Gamma}$. Clearly $0 \leq c_{\gamma}^n\leq c_{\gamma}^n+d_{\gamma}^n$, for every $n\geq 1$ and $\gamma\in \Gamma$. Thus, $0 \leq \parallel c_n\parallel_{\infty} \leq \parallel c_n+d_n\parallel_{\infty}$, $\forall n \geq 1$, which yields $\displaystyle \lim_{n \rightarrow +\infty}\parallel c_n\parallel_{\infty}=0$. Hence, weak-$\lim_{n \rightarrow +\infty}c_n=0$.

In order to prove that the origin is not a point of continuity for $c_{00}(\Gamma)^+$ we will use the equivalence (i)$\Leftrightarrow$(iii) of Theorem \ref{Teorema_caracte_denting}. Hence, it is sufficient to show that the origin is not a denting point of $c_{00}(\Gamma)^+$. For that purpose we will check that Int$(c_{00}(\Gamma)^+)^*=\emptyset$ and, applying the equivalence (i)$\Leftrightarrow$(iii) in Theorem \ref{Tma_JMAA}, the proof will be completed. Now, the density of $c_{00}(\Gamma)$ in $c_{0}(\Gamma)$  provides the equality 
$(c_{00}(\Gamma)^+)^*=\ell_{1}(\Gamma)^+$ --see \cite{Fabian2001}--, where $\parallel (x_{\gamma})_{\gamma \in \Gamma}\parallel_1 = \sum_{\gamma \in \Gamma}|x_{\gamma}|=\sup\{\sum_{\gamma \in F}|x_{\gamma}|\colon F \subset \Gamma \mbox{ is finite}\}$. Now we claim that $\mathrm{Int}(\ell_{1}(\Gamma)^+)=\emptyset$. Indeed, let us fix $x=(x_{\gamma})_{\gamma \in \Gamma}\in \ell_{1}(\Gamma)^+$, $\epsilon>0$, and $\gamma_0 \in \Gamma$ such that $0\leq x_{\gamma_0}< \epsilon/2$. Define $y=(y_{\gamma})_{\gamma \in \Gamma} \in \ell_{1}(\Gamma)$ as $y_{\gamma}:=x_{\gamma}$ for $\gamma\not =\gamma_0$ and $y_{\gamma_0}:=-\epsilon/2$. Then $y \not \in \ell_{1}(\Gamma)^+$ and $\parallel x-y\parallel_1 <\epsilon$.
\end{proof}

Next, we will study spaces of differentiable functions defined on some bounded real interval. Let us recall that for every continuous real function $f$ on a topological space $X$, it is defined the \emph{support of $f$} --written supp($f$)-- as the closure of the set $\{x \in X\colon f(x)\not =0\}$. 
\begin{example}\label{example_C^k}
Let us fix any $k \geq 1$, consider the vector space $C^{k}[a,b]$ of all functions on $[a,b]$ that have $k$ continuous derivatives, the noncomplete normed space $(C^k[a,b],\normsup)$ where $\parallel f \parallel_{\infty}:=\sup\{|f(x)|\colon x \in [a,b]\}$, and the order cone $C^k[a,b]^+:=\{f \in C^k[a,b] \colon f(x)\geq 0,\, \forall x \in [a,b]\}$. Then $C^k[a,b]^+ \in \mathrm{q}\mathcal{G}^*$ and the origin is not a point of continuity for $C^k[a,b]^+$.
\end{example}
\begin{proof}
Let us prove only the case $k=1$. The relation $C^k[a,b]^+ \in \mathrm{q}\mathcal{G}^*$ can be proved using again Proposition~\ref{Prop_weakly_imply_dual_quasi_generating} with an argument similar to that we used in the first paragraph of the proof in the previous example. 

In order to prove the last claim in the statement we will use again the equivalence (i)$\Leftrightarrow$(iii) of Theorem \ref{Teorema_caracte_denting}. Thus, it suffices to prove that the origin is not a denting point of $C^1[a,b]^+$. The equivalence (i)$\Leftrightarrow$(vi) of \cite[Theorem 1.1]{GARCIACASTANO20151178} and the statement (i) of \cite[Corollary 1.2]{GARCIACASTANO20151178} implies that if the origin is a denting point of $C^1[a,b]^+$, then every sequence in $C^1[a,b]^+$ converging to the origin in the weak topology also converges in norm. We will show that this is not the case. For that purpose, we consider a decreasing sequence $\{a_n\}_n$ in $(a,b]$ that converges to $a$. Define a bounded sequence $\{f_n\}_n$ in $C^1[a,b]$ of positive functions vanishing at $a$, such that supp($f_n$) $\subset [a_{n+1},a_{n-1}]$, and $f_n(a_n)=1$, $\forall n \in \N$. Then the sequence $\{f_n\}_n$ converges to the origin pointwise on $[a,b]$, and it is bounded, so it converges to the origin in $C[a,b]$ in the weak topology (see \cite[Example 2, p. 112]{Reed1980}). Now the density of $C^1[a,b]$ in $C[a,b]$ and the Hahn-Banach theorem yields that $\{f_n\}_n$ converges to the origin in $C^1[a,b]$ in the weak topology, although not in norm. It follows that the origin is not a denting point of  $C^1[a,b]^+$.
\end{proof}

Next, we will examine the behaviour of the space of real functions with compact support.  
\begin{example}\label{example_C00(R)}
Let us consider the vector space $C_{00}(\R)$  of those real continuous functions on $\R$ for which supp($f$) is compact, the noncomplete normed space $(C_{00}(\R),\normsup)$ where $\parallel f \parallel_{\infty}:=\sup\{|f(x)|\colon x \in \R\}$, and the order cone $C_{00}(\R)^+:=\{f \in C_{00}(\R) \colon f(x)\geq 0,\, \forall x \in \R\}$. Then $C_{00}(\R)^+\in \mathrm{q}\mathcal{G}^*$ and the origin is not a point of continuity for $C_{00}(\R)^+$.
\end{example}
\begin{proof}
Again by Proposition \ref{Prop_weakly_imply_dual_quasi_generating}, the cone $(C_{00}(\R)^+)^*$ is quasi-generating. 

As in the proof of the previous examples, it suffices to prove that the origin is not a denting point of $C_{00}(\R)^+$. The set $\{\pm \delta_x\colon x \in \R\}$ is a James boundary in $C_{00}(\R)^*$, in other words, given $f \in C_{00}(\R)$ we have $\parallel f \parallel_{\infty}=|f(x_0)|=|\delta_{x_0}(f)|$, for some $x_0 \in \R$. Then the convergence of any sequence $(f_n)_n$ in $C_{00}(\R)$ in the weak topology is given by the convergence in $\R$ of  $\delta_x(f_n)$ for every $x \in \R$, or equivalently, it is given by the pointwise convergence of  $(f_n)_n$ on $\R$ (see \cite{Fabian2001}). Again, we can define a bounded sequence $(f_n)_n$ in $C_{00}(\R)^+$ that converges to the origin pointwise but not in norm. Hence, such a sequence converges to the origin in the weak topology and, as a consequence, the origin is not a denting point of $C_{00}(\R)^+$. 
\end{proof}

\begin{acknowledgements}
We thank the referees for their suggestions which have helped us to improve the overall aspect of the manuscript.
\end{acknowledgements}

\end{document}